\newtheorem{thm}{Theorem}[section]
\newtheorem{cor}[thm]{Corollary}
\newtheorem{lem}[thm]{Lemma}
\theoremstyle{definition}
\newtheorem{defn}[thm]{Definition}
\newtheorem{rem}[thm]{Remark}
\numberwithin{equation}{section}
\def\bysame{\leavevmode\hbox to3em{\hrulefill}\thinspace}
\newcommand{\osc}{\operatorname{osc}}
\begin{document}

\title[A priori estimates for
parabolic Monge-Amp\`ere type equations]{A priori estimates for \\
parabolic Monge-Amp\`ere type equations}

\author[Y. Zhou and R. Zhu]{Yang Zhou and Ruixuan Zhu}

\address[Yang Zhou]{The Institute of Mathematical Sciences, The Chinese University of Hong Kong, Shatin, NT, Hong Kong}
\email{yzhou@ims.cuhk.edu.hk}

\address[Ruixuan Zhu]{Institute for Theoretical Sciences, Westlake University, Hangzhou, 310030, China}
\email{zhuruixuan@westlake.edu.cn}

\subjclass[2010]{35K96, 35B65.}

\keywords{Parabolic Monge-Amp\`ere equation, Gauss Curvature Flow, regularity.}

%----------------------------------------------------------------
\begin{abstract}
We prove the existence and regularity of convex solutions
to the first initial-boundary value problem for the parabolic Monge-Amp\`ere equation
\begin{equation*}
  \left\{\begin{aligned}
    -u_t + \det D^2u &= \psi(x,t) \quad\quad\ \text{ in } Q_T,\\
    u&=\phi\quad\quad \quad\quad \, \  \text{ on } \partial_pQ_T,
  \end{aligned}\right.
\end{equation*}
where $\psi,\phi$ are given functions, $Q_T=\Omega\times(0,T]$,
$\partial_p Q_T$ is the parabolic boundary of $Q_T$,
and $\Omega\subset\mathbb{R}^n$ is a uniformly convex domain.
Our approach can also be used to prove similar results for the $\gamma$-Gauss curvature flow with any $0<\gamma\le 1$.
\end{abstract}
\maketitle

\section{Introduction}

The Monge-Ampère equation arises in various geometric problems, including the Minkowski problem and geometric flows. The elliptic Monge-Ampère equations have been extensively studied since the 1970s; we refer readers to~\cite{figalli,twbook} and references therein for further details.

Over the past decades, the following parabolic Monge-Amp\`ere equation has been widely investigated:
\begin{equation}\label{eq:1.1}
  \left\{\begin{aligned}
    -u_t + \rho(\det D^2u) &= \psi(x,t) \quad\quad\ \text{ in } Q_T,\\
    u&=\phi\quad\quad \quad\quad \, \  \text{ on } \partial_pQ_T,
  \end{aligned}\right.
\end{equation}
where $Q_T=\Omega\times(0,T]$, $\partial_p Q_T$ is the parabolic boundary of $Q_T$,
and $\Omega$ is a convex domain in $\mathbb{R}^n$.

To our best knowledge, the literature on this equation can be categorized into three main cases:
\begin{itemize}
    \item[$(i)$] $\rho(s) = s^{\frac1n}$.
    \item[$(ii)$] $\rho(s) = - 1/s$ and $\psi\equiv 0$;
    \item[$(iii)$] $\rho(s) = \log s$.
\end{itemize}

In case $(i)$, Ivochkina and Ladyzhenskaia~\cite{I} obtained the \textit{a priori} $\tilde{C}^{2,\alpha}$ estimates of the first initial-boundary value problem (see also~\cite{lie}) if $ \Omega$ and $\phi(\cdot, 0)$ are uniformly convex, and if either
\begin{equation}
 \inf_{\partial_pQ_T} \phi_t + \inf_{Q_T} \psi > \frac{1}{2}  ad^2
\end{equation}
or
\begin{equation}\label{eq:il}
\inf_{\partial_pQ_T} ( \phi_t+\psi) > 0 \quad \text{and} \quad
\psi  \text{ is ~ concave~ with~ respect~ to~ $x$,}
\end{equation}
where $a = \max\{\sup_{Q_T} \psi_t, 0\}$ and $d = \text{diam}\, \Omega$. 
The \textit{a priori} $C^{1,\alpha}$ and $W^{2,p}$ estimates for a generalized form of this equation were established in~\cite{hl, ds2012}. Boundary regularity results were subsequently obtained by Tang~\cite{tang}.
Caffarelli-Schauder type estimates were derived in~\cite{ww2} under the assumption that $\psi$ is Lipschitz continuous. 

For case $(ii)$, the equation was used by Tso~\cite{tso} to investigate the Gauss curvature flow, where he derived $\tilde{C}^{2,\alpha}$ \textit{a priori} estimates. The Schauder estimates and $W^{2,p}$ estimates were later established in~\cite{ww1} and~\cite{GH2001}, respectively. The J\"orgens-Calabi-Pogorelov type results on entire solutions were obtained in \cite{GH1998,zb}.
Moreover, one can refer to \cite{lie,kr1976} for another type of parabolic Monge-Amp\`ere equation $-u_t \det D^2u = \Psi$. 

In case $(iii)$, Tso \cite{tso2} studied the first initial-boundary value problem and derived the $\tilde{C}^{2,\alpha}$ \textit{a priori} estimates. 
Xiong and Bao~\cite{xiong} considered a generalized form where $\rho(s)=\eta(\log s)$ satisfies $\eta'>0$ and $ n\eta'' \le \eta'$ to make the parabolic operator 
$$P(-u_t, D^2u):=-u_t + \rho(\det D^2u)$$ 
concave with respect to $D^2u$. Schnürer and Smoczyk investigated the Neumann boundary problem for a similar equation in~\cite{sh}.

However, the seemingly simplest scenario $\rho(s)=s$ remains less understood. 
Unlike the three cases discussed above, the parabolic operator $P(-u_t, D^2u)= -u_t + \det D^2u$ is not concave.
The existence of a global smooth solution of \eqref{eq:1.1} in this case is a long-standing open problem due to the lack of concavity.

In a celebrated work~\cite{cns1}, Caffarelli, Nirenberg, and Spruck established global $C^{2,\alpha}$ estimates for the Dirichlet problem of elliptic Monge-Amp\`ere equations $\det D^2u=\psi$, proving the existence of unique smooth convex solutions. 
This naturally raises the question of whether the first initial-boundary value problem of parabolic Monge-Amp\`ere equation 
\begin{equation}\label{1-eq}
  \left\{\begin{aligned}
    -u_t + \det D^2u &= \psi(x,t) \quad\quad\ \text{ in } Q_T,\\
    u&=\phi\quad\quad \quad\quad \, \  \text{ on } \partial_pQ_T,
  \end{aligned}\right.
\end{equation}
has such global regularity under suitable conditions.
In this article, we provide an affirmative answer to this question under conditions \eqref{eq:il} given by Ivochkina and Ladyzhenskaia~\cite{I}.
Specifically, we assume that
$\Omega$ is a uniformly convex domain in $\mathbb{R}^n$ with smooth boundary, and $\psi,\phi$ are smooth functions satisfying the following conditions:
\begin{itemize}
    \item[$(\mathcal P1)$] $\phi_t + \psi \ge c_0>0$ on $\partial\Omega\times[0,T]$;
    \item[$(\mathcal P2)$] $\phi(x,0)$ is uniformly convex with respect to $x$; 
    \item[$(\mathcal P3)$] $\psi(x,t)$ is concave with respect to $x$.
\end{itemize}

In this paper, we consider solutions in the following Banach space.
\begin{defn}\label{hs}
For $Q=\Omega\times I$, where $\Omega\subset\mathbb{R}^n$ is an open convex domain, and $I=(a,b]$,
we denote by $\tilde{C}^{k,\alpha}(\overline{Q})$ the Banach space with the norm
{\small \begin{equation*}
  \begin{aligned}
    ||w||_{\tilde{C}^{k,\alpha}(\overline{Q})}=\sup _{\substack{|\gamma|+2 s \leqq k \\(x, t) \in Q}}\left|D_x^\gamma D_t^s w(x, t)\right| +\sup _{\substack{|\gamma|+2 s=k \\(x, t),(y, \tau) \in Q}} \frac{\left|D_x^\gamma D_t^s w(x, t)-D_y^\gamma D_{\tau}^s w(y, \tau)\right|}{\left(|x-y|^2+|t-\tau|\right)^{\alpha / 2}}.
  \end{aligned}
\end{equation*}}
\end{defn}

We shall prove
\begin{thm}\label{0-pri}
Let $u\in \tilde{C}^4(\overline{Q}_T)$ be convex in $x$ and solve equation~\eqref{1-eq} under the above assumptions.
Then it holds
$$||u||_{\tilde{C}^{2,\alpha}(\overline{Q}_T)}\le C,$$
for some positive constants $C$ and $\alpha\in (0, 1)$
depending only on $n,T,\Omega, c_0,||\psi||_{\tilde{C}^2(\overline{Q}_T)},
||\phi||_{\tilde{C}^4(\overline{Q}_T)}$ and the uniform convexity of $\phi(\cdot,0)$.
\end{thm}

By the \textit{a priori} estimate, we obtain the existence of solution to equation~\eqref{1-eq}. 

\begin{thm}\label{thm1.1}
Assume that equation~\eqref{1-eq} satisfies the compatibility condition of any order (see Remark~\ref{rem}) and the above assumptions. Then there exists a unique smooth solution $u$, convex in $x$, to the first initial-boundary value problem~\eqref{1-eq}.
\end{thm}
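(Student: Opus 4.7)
The strategy is the classical continuity method: I would embed \eqref{1-eq} in a one-parameter family of problems connecting it to a problem with an explicit smooth, uniformly parabolically convex solution (for instance by deforming the boundary datum $\phi$ to a convex quadratic plus $t$, or by adding a parameter to the right-hand side), and prove that the set of parameters $s \in [0,1]$ for which a smooth parabolically convex solution exists is both open and closed. Openness follows from the implicit function theorem in $\tilde C^{2,\alpha}(\overline{Q}_T)$ applied to the linearization of $F(u) = u_t - \det D^2 u = 0$ at a smooth uniformly convex solution, namely $w_t - (\det D^2 u)\, u^{ij} w_{ij} = 0$, which is uniformly parabolic. Closedness reduces to a priori $\tilde C^{2,\alpha}$ estimates, after which Schauder bootstrapping together with the compatibility conditions of all orders delivers higher regularity.

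For the a priori estimates I proceed in the classical order. The $C^0$ bound follows from comparison with $\phi$ and with sub/supersolutions built from the uniformly convex $\phi(\cdot,0)$. Differentiating \eqref{1-eq} in $t$ shows that $v := u_t = \det D^2 u$ satisfies the linear parabolic equation $v_t = (\det D^2 u)\, u^{ij} v_{ij}$, so the parabolic maximum principle, combined with $\phi_t \ge c_0 > 0$ on $\partial\Omega \times [0,T]$, yields the two-sided bound
$$0 < c \le u_t = \det D^2 u \le C \quad \text{on } \overline{Q}_T.$$
The spatial gradient estimate is immediate from $x$-convexity of $u$: $|Du|$ attains its maximum on the lateral boundary and is controlled there by barriers built from $\phi$ and the uniform convexity of $\Omega$.

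The crux of the proof is the $C^2$ estimate, which I expect to be the principal obstacle. On the lateral boundary, tangential and mixed tangential-normal second derivatives come from differentiating $u = \phi$ twice along tangents to $\partial\Omega$ and using its uniform convexity. The double normal derivative $u_{\nu\nu}|_{\partial\Omega}$ is the delicate one; I would control it by constructing a barrier of the form $\psi = \pm(u - \phi) + A d - B d^2$, where $d$ is the signed distance to $\partial\Omega$, tuned against the equation and the positive lower bound on $\det D^2 u$ obtained above. The interior $C^2$ estimate is by a Pogorelov-type argument applied to an auxiliary function such as
$$W(x,t,\xi) = \log u_{\xi\xi} + \tfrac{1}{2}|Du|^2 + \mu u, \quad |\xi|=1,$$
multiplied by a cutoff vanishing on the parabolic boundary; differentiating \eqref{1-eq} twice in $\xi$, applying the maximum principle to $W$, and using $\det D^2 u \ge c$ yields an upper bound on $D^2 u$, after which the determinant lower bound forces a positive lower bound on the eigenvalues of $D^2 u$.

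With $\|D^2 u\|_{L^\infty} \le C$ and $0 < c \le \det D^2 u \le C$, equation \eqref{1-eq} becomes uniformly parabolic on a uniformly convex solution, so the parabolic Evans--Krylov theorem gives $u \in \tilde C^{2,\alpha}(\overline{Q}_T)$. Differentiating the equation, applying linear Schauder theory, and matching across $\partial_p Q_T$ via the compatibility conditions of all orders upgrades $u$ to $C^\infty(\overline{Q}_T)$, closing the continuity method. Uniqueness is a direct consequence of the parabolic comparison principle: if $u_1, u_2$ are two smooth parabolically convex solutions, then $w = u_1 - u_2$ satisfies a linear uniformly parabolic equation (with coefficients obtained by integrating the Jacobian of $\det$ along the segment from $D^2 u_2$ to $D^2 u_1$) with zero data on $\partial_p Q_T$, forcing $w \equiv 0$.
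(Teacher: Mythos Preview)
Your overall architecture (a priori estimates followed by a continuation argument, plus uniqueness via linearization and the comparison principle) is reasonable, and your $C^0$, $C^1$, and two-sided $u_t$ estimates are essentially those of the paper. But the heart of the argument---the interior $C^2$ estimate and the $\tilde C^{2,\alpha}$ estimate---contains a genuine gap.

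The equation $u_t=\det D^2u$ is \emph{not} concave in $D^2u$ (unlike $u_t=(\det D^2u)^{1/n}$ or $u_t=\log\det D^2u$). When you differentiate twice in a direction $\xi$ you obtain
\[
u_{t\xi\xi}-(\det D^2u)\,u^{ij}u_{ij\xi\xi}
=(\det D^2u)\Bigl[(u^{ij}u_{ij\xi})^2-u^{ip}u^{jq}u_{ij\xi}u_{pq\xi}\Bigr],
\]
and the bracket has no sign. Hence a Pogorelov-type maximum principle for $u_{\xi\xi}$ (or for your auxiliary $W$) does not close: the bad term cannot be absorbed, and one cannot conclude that $\sup_{Q_T}|D^2u|$ is controlled by its boundary values. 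For the same reason, the parabolic Evans--Krylov theorem, which requires concavity or convexity of the operator, does not apply directly to \eqref{1-eq}, so your passage from a $C^2$ bound to $\tilde C^{2,\alpha}$ is not justified.

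The paper's resolution is a duality trick you are missing: after the boundary $C^2$ estimate (done as in Caffarelli--Nirenberg--Spruck; incidentally, the mixed tangential-normal second derivatives require a barrier, not just differentiation of $u=\phi$), one passes to the Legendre transform $U(y,t)=\sup_x\{x\cdot y-u(x,t)\}$, which satisfies the \emph{concave} equation $-U_t\det D^2U=1$. On the dual side the maximum principle does give $\sup|D^2U|\le\sup_{\partial_p}|D^2U|$, yielding the global $C^2$ bound for $u$, and Evans--Krylov / Krylov--Safonov applied to the dual equation (together with a separate boundary argument for the tangential derivatives of $u$) produce the $\tilde C^{2,\alpha}$ estimate. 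Without this Legendre-transform step or an equivalent device, your proposal does not go through.
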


\begin{rem}
    It is worth pointing out that in subsection \ref{s3.1}, we take examples to show that condition $({\mathcal P3})$ is necessary for the existence of convex solutions to problem~\eqref{1-eq}.
\end{rem}

We can also prove Theorems~\ref{0-pri} and~\ref{thm1.1} for Gauss curvature flow with boundary
\begin{equation}\label{1-gcf}
  \left\{\begin{aligned}
    u_t&=\frac{\det D^2u}{(1+|Du|^2)^{(n+1)/2}}\quad\quad \text{ in } Q_T,\\
    u&=\phi\quad\quad\quad\quad\quad\quad\quad\quad\quad\;  \text{ on } \partial_pQ_T.
  \end{aligned}\right.
\end{equation} 
Equation~\eqref{1-gcf}, corresponding to this setting, was studied in~\cite{O}, where the author proved the existence of certain self-similar solutions and obtained gradient estimates in the homogeneous case $\phi\equiv 0$. However, higher-order global regularity for this case remains an open problem.

Our main difficulty in proving Theorem \ref{0-pri} lies in proving the interior second-order derivative estimates and the boundary $\tilde C^{2,\alpha}$ estimates under the lack of concavity.
It is worth pointing out that our approach to obtaining such global estimates requires a combination of corresponding boundary estimates for equation \eqref{1-eq} and interior estimates for equation \eqref{duality}, the equation of the Legendre transform of $u$, rather than studying only one of the equations to obtain them. 

The proof of Theorem~\ref{0-pri} can be divided into the following three steps:

{\bf Step 1:~ Establish the $C^2$ estimate on the boundary}

We first obtain the $C^0$ and $C^1$ estimates, and
\begin{equation}\label{0-es-t}
 C^{-1}\le u_t+\psi\le C,
\end{equation}
for some positive constant $C$.
Then, we obtain the bound of $D^2u$ on the parabolic boundary by using the method in~\cite{cns1}.
Thus, by~\eqref{0-es-t} we have the estimates for the eigenvalues
\begin{equation}\label{0-es-u}
\lambda(D^2u)\ge C\quad \text{on}\quad \partial_p Q_T.
\end{equation}

{\bf Step 2:~ Establish the global $C^2$ estimate by the duality method}

Due to the lack of concavity, we cannot use the maximum principle to control the global maximum of $|D^2u|$ by $\sup_{\partial_p Q_T} |D^2u|$. Instead, we apply the estimates in {\bf Step 1} to the equation after the Legendre transform.

Let $U$ be the Legendre transform of $u$, given by
$$U(y,t)=\sup \{ y\cdot x-u(x,t):\ x\in \Omega\}. $$
Denote $Q_T^*=\{(y,t):\, y\in Du(\Omega,t),t\in(0,T]\}$.
Then $U$ satisfies
\begin{equation}\label{duality}
  -U_t - \frac1{\det D^2U} = -\psi(DU,t) \quad\text{ in }Q_T^*.
\end{equation}
Since the above equation is concave, one can infer
$$\sup_{Q_T^*}||D^2U||\le \sup_{\partial_pQ_T^*}||D^2U||,$$
which combined with~\eqref{0-es-t} and~\eqref{0-es-u} yields an upper bound for the eigenvalues
$\lambda(D^2U)$ in $Q^*_T.$
Then, by duality we have
$\lambda(D^2u)\ge C~ \text{in}~Q_T.$
By~\eqref{0-es-t}, it follows that
\begin{equation}
C^{-1}\le \lambda(D^2u)\le C \quad \text{in}\quad Q_T.
\end{equation}

{\bf Step 3:~ Establish the global $\tilde C^{2,\alpha}$ estimate}

According to {\bf Step 2}, equation~\eqref{duality} is uniformly parabolic.
Since the equation is also concave, we can obtain the space H\"older estimate for $D^2U(\cdot,t)$ for each $t$
by the Evans-Krylov theorem.
The interior $\tilde C^{2,\alpha}$ regularity then follows by~\cite{E,Kr}
and the interior higher-order regularity of $U$ also follows by the bootstrap technique.

As for the boundary estimate, we first use Krylov-Safonov's estimate (see~\cite{KS})
to get the global space-time H\"older estimate for $u_t$.
Then by differentiating the equations along tangential directions $\tau_k$,
we shall prove the $\tilde C^{1,\alpha}$ regularity of $D_{\tau_k}u$ on the lateral surface of the parabolic boundary,
followed by the space-time H\"older estimate for second normal derivatives.

This paper is arranged as follows.
In Section~\ref{s2}, we establish the {\it a priori} estimates in $\tilde C^{2,\alpha}(\Omega\times[\epsilon,T])$.
We then obtain the existence and uniqueness of solutions in Section~\ref{s3},
completing the proof of Theorems~\ref{0-pri} and~\ref{thm1.1}.
Finally, we prove the existence of a unique smooth solution to equation~\eqref{1-gcf} in Section~\ref{s4}.

\section{The a priori estimate}\label{s2}
We first consider the parabolic Monge-Amp\`ere equation
\begin{equation}\label{eq}
  \left\{\begin{aligned}
    -u_t + \det D^2u &= \psi(x,t) \quad\quad\ \text{ in } Q_T,\\
    u&=\phi\quad\quad \quad\quad \, \  \text{ on } \partial_pQ_T,
  \end{aligned}\right.
\end{equation}
where $Q_T=\Omega\times(0,T]$, $\partial_p Q_T$ is the parabolic boundary of $Q_T$, and $\Omega$ is a uniformly convex domain in $\mathbb{R}^n$ with smooth boundary.
Assume that $\psi,\phi$ are smooth functions satisfying assumptions $(\mathcal P1)- (\mathcal P3)$.

In this section, we suppose that $u\in \tilde{C}^{4}(\overline{Q}_T)$ is convex in $x$ and a solution to equation~\eqref{eq}. We aim to prove the $\tilde{C}^{2,\alpha}$ {\it a priori} estimate for $u$.
For this purpose, we shall obtain the $C^2$ estimate for $u$ on the parabolic boundary first.

\begin{lem}[Comparison Principle] \label{le:cp}
    Assume that $w,v\in\tilde{C}^{2}(\overline{Q}_T \setminus \partial_p Q_T) \cap C(\overline{Q}_T)$ and one of $w,v$ is convex in $x$.
    If $-w_t + \det D^2w \ge -v_t + \det D^2v$ in $\overline{Q}_T \setminus \partial_p Q_T$ and $w \le v$ on $\partial_p Q_T$, then $w \le v$ in $Q_T$.
\end{lem}

\begin{proof}
    See \cite[Theorem 14.1]{lie}.
\end{proof}

\subsection{Step 1: $C^2$ estimates on the boundary}
\hfill

At first, we obtain the $C^0$ and $C^1$ estimates, stated as the lemma below.
\begin{lem}\label{lem1}
$u(\cdot,t)$ is strictly convex for any $t\in[0,T]$, and the following estimates hold,
  \begin{equation}\label{es-01}
    ||u||_{C^1(\overline{Q}_T)}\le C,
  \end{equation}
  and
  \begin{equation}\label{es-t}
    1/C\le u_t+ \psi\le C.
  \end{equation}
  Here $C$ is a universal positive constant that depends only on $n, T$, $\Omega$, $c_0$, $||\phi||_{\tilde C^2(\overline{Q}_T)}$, $\|\psi\|_{L^\infty({Q}_T)}$, $\|\psi_t\|_{L^{n+1}({Q}_T)}$ and the uniform convexity of $\phi(\cdot,0)$.
\end{lem}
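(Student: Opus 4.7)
The strategy is to obtain the two-sided bound \eqref{es-t} on $u_t$ first by linearizing the equation in time and applying the weak maximum principle, then to deduce strict convexity and the $C^0$ bound as immediate consequences, and finally to upgrade to the $C^1$ bound via convexity together with a standard boundary barrier argument.

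The key computation is to differentiate $u_t = \det D^2 u$ in $t$ and use $\partial_{u_{ij}}\det D^2 u = U^{ij}$, the cofactor matrix of $D^2u$. Setting $w := u_t$, this shows that $w$ satisfies the linear parabolic equation
$$w_t - U^{ij} w_{ij} = 0 \quad \text{in } Q_T.$$
Since $u(\cdot,t)$ is convex, $(U^{ij})$ is positive semidefinite, so the operator is (possibly degenerate) parabolic with no zero-order term, and the weak maximum principle forces $u_t$ to attain its extremal values on $\partial_p Q_T$. On the lateral face $\partial\Omega \times [0,T]$ we have $u_t = \phi_t \in [c_0,\|\phi_t\|_{\infty}]$. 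On the initial face $\Omega \times \{0\}$ we have $u_t = \det D^2 \phi(\cdot,0)$, which is sandwiched between two positive constants by the uniform convexity and $C^2$-norm of $\phi(\cdot,0)$. This gives \eqref{es-t}.

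From $\det D^2 u = u_t \ge 1/C > 0$ combined with $D^2u \ge 0$, one reads off that $D^2 u > 0$ at every point of $\overline Q_T$, so $u(\cdot,t)$ is strictly convex for each $t$. The $C^0$ bound is then immediate by integrating \eqref{es-t} in $t$ against the initial data $u(\cdot,0)=\phi(\cdot,0)$. For the $C^1$ estimate, the interior bound for convex functions gives $|Du(x,t)| \le C\,\osc_{\Omega} u(\cdot,t)/\operatorname{dist}(x,\partial\Omega)$, which controls the gradient away from the lateral boundary. On $\partial\Omega$, the tangential components of $Du$ equal those of $D\phi$ and are therefore controlled. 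For the normal component at $x_0 \in \partial\Omega$, the one-sided bound on $D_\nu u(x_0,t)$ follows by evaluating the subgradient inequality $u(x_0 + \varepsilon\nu,t) \ge u(x_0,t) + \varepsilon D_\nu u(x_0,t)$ and using the $C^0$ bound; the opposite bound is obtained by constructing an affine lower barrier that agrees with $u$ at $x_0$ and lies below $\phi(\cdot,t)$ on $\partial\Omega$ nearby, which is possible thanks to the uniform convexity of $\Omega$ and the $C^2$-regularity of $\phi$.

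The only genuine technical point is the maximum principle step: because strict convexity of $u$ is not yet known at that stage, the linearized coefficient matrix $(U^{ij})$ may degenerate. This is harmless, because the weak maximum principle for a linear parabolic equation without a zero-order term requires no lower ellipticity bound and is valid in the degenerate case. Everything else then reduces to convexity and the fundamental theorem of calculus in $t$.
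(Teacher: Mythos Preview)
Your treatment of \eqref{es-t}, strict convexity, and the $C^0$ bound is correct and in places more direct than the paper's: you read off $D^2u>0$ immediately from $\det D^2u\ge 1/C$ together with $D^2u\ge 0$, and obtain the $C^0$ bound by integrating $u_t$ in time, whereas the paper argues separately that $\min u$ is attained on $\partial_pQ_T$ and cites \cite{ca1} for strict convexity. Your linearized equation for $w=u_t$ coincides with the paper's (the paper writes $h_t=hu^{ij}h_{ij}$ with $u^{ij}$ the inverse Hessian, which is your cofactor form since $h=\det D^2u$), and your remark that the weak maximum principle needs no lower ellipticity bound is exactly the point.

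There is, however, a genuine gap in the $C^1$ estimate, namely the upper bound for the outward normal derivative $D_\nu u$ on $\partial\Omega$. The subgradient inequality, applied along the \emph{inward} normal with step comparable to $\operatorname{diam}\Omega$, correctly gives $D_\nu u(x_0,t)\ge -C$. But an \emph{affine} lower barrier cannot deliver the opposite inequality: if $\ell$ is affine with $\ell(x_0)=\phi(x_0,t)$ and $\ell\le\phi$ on $\partial\Omega$, nothing forces $u\ge\ell$ in $\Omega$. A convex function with prescribed boundary values can dip arbitrarily far below any affine function in the interior (take $u_\varepsilon=(|x|^2-1)/\varepsilon$ on $B_1$ with zero boundary data; then $D_\nu u_\varepsilon=2/\varepsilon\to\infty$). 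Monge--Amp\`ere comparison does not help either, since $\det D^2\ell=0\le\det D^2u$ is the wrong direction.

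The missing upper bound on $D_\nu u$ genuinely requires the inequality $\det D^2u=u_t\le C$ from \eqref{es-t}, used through a \emph{strictly convex} lower barrier. The paper fixes $t_0$, constructs a smooth uniformly convex $w$ with $w=\phi(\cdot,t_0)$ on $\partial\Omega$ and $\det D^2w\ge C_2\ge\sup u_t$ (for instance $w=\phi+\delta(-d_{\partial\Omega}+d_{\partial\Omega}^2)$ near the boundary for large $\delta$), and applies the comparison principle for the elliptic Monge--Amp\`ere equation to obtain $u(\cdot,t_0)\ge w$ in $\Omega$, whence $D_\nu u\le D_\nu w\le C$. Replacing your affine barrier by such a $w$ closes the gap.
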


\begin{proof}
By the comparison principle, we have
\begin{equation*}
    \pm u \le \Big(\sup_{\partial_p Q_T} |\phi| + \sup_{{Q}_T}|\psi| \Big) e^t \quad \text{in} \  Q_T,
\end{equation*}
which yields that $\|u\|_{C^0(\overline{Q}_T)} \le (\sup_{\partial_p Q_T} |\phi| + \sup_{{Q}_T}|\psi|) e^T$.

To obtain \eqref{es-t}, differentiating the equation \eqref{eq} with respect to $t$, we have
\begin{equation}\label{eq:l-ut}
  \mathcal L u_t =\psi_t \quad\text{ in } \ Q_T,  
\end{equation}
where the linearized operator $\mathcal L$ of \eqref{eq} is
\begin{equation}\label{eq:l}
    \mathcal L  = - \partial_t + \det D^2 u \cdot u^{ij} \partial_{ij},
\end{equation}
and $u^{ij}$ is the $(i,j)$-entry of the inverse matrix of $D^2u$. Since $\psi(x,t)$ is concave in $x$, then
\begin{equation*}
    \mathcal L(u_t +\psi)  = \det D^2 u \cdot u^{ij} \partial_{ij}\psi \le 0.
\end{equation*}
It follows form the weak maximum principle that
\begin{equation*}
    \inf_{Q_T} (u_t +\psi) = \inf_{\partial_p Q_T} (u_t +\psi) = \min \{ \inf_{\partial\Omega\times(0,T]} (\phi_t +\psi), \inf_{\Omega\times\{0\}} (\det D^2 \phi) \}.
\end{equation*}
That is
\begin{equation}\label{eq:ut-lb}
    \inf_{Q_T} (u_t +\psi)  \ge \min \{ c_0,  \inf_{\Omega} (\det D^2 \phi(\cdot, 0)) \}>0,
\end{equation}
or,
\begin{equation*}
    \inf_{Q_T} u_t \ge - \sup_{Q_T} |\psi| + \min \{ c_0,  \inf_{\Omega} (\det D^2 \phi(\cdot, 0) \}.
\end{equation*}

By using the parabolic Alexandrov-Bakelman-Pucci maximum principle (\cite[Theorem 7.1]{lie}) to equation \eqref{eq:l-ut}, we have
\begin{equation}
    \sup_{{Q}_T} u_t \le \sup_{\partial_p{Q}_T} u_t + C_n (\text{diam}\,\Omega)^{\frac{n}{n+1}} \left(\iint_{\Gamma_{u_t}} |u_{tt} \det D^2 u_t| dxdt \right)^{\frac{1}{n+1}},
\end{equation}
where
\begin{equation*}
    \Gamma_{u_t} = \{(x,t)\in Q_T : u_{tt} \ge 0, D^2 u_t  \le 0\}, \quad C_n = (n(n+1)/\omega_n)^{\frac{1}{n+1}},
\end{equation*}
and $\omega_n$ is the surface area of unit sphere in $\mathbb R^n$.
Note that on $\Gamma_{u_t}$,
 \begin{equation*}
 \begin{split}
     u_{tt} \det (-D^2 u_t) & = (\det D^2u)^{-\frac{n-1}{n+1}} u_{tt} \cdot \det \Big[ (\det D^2u)^{\frac{2}{n+1}} (D^2 u)^{-1} (-D^2 u_t) \Big] \\
     & \le \left( \frac{ u_{tt} + (\det D^2u) \text{tr} \Big[  (D^2 u)^{-1} (-D^2 u_t) \Big] }{(n+1)(\det D^2u)^{\frac{n-1}{n+1}}} \right)^{n+1} \\
     & = \left( \frac{ - \mathcal L u_t }{(n+1)(u_t +\psi)^{\frac{n-1}{n+1}}} \right)^{n+1}.
 \end{split} 
 \end{equation*}

It follows that
\begin{equation}\label{eq:ut-ub}
\begin{split}
    \sup_{{Q}_T} u_t & \le \sup_{\partial_p{Q}_T} u_t + \frac{C_n (\text{diam}\,\Omega)^{\frac{n}{n+1}}}{(n+1) (\inf_{Q_T} (u_t +\psi))^{\frac{n-1}{n+1}} }   \left(\iint_{\Gamma_{u_t}} \big[(-\psi_t)^+ \big]^{n+1} dxdt \right)^{\frac{1}{n+1}} \\
    & \le \max\{  \sup_{\partial\Omega\times(0,T]} \phi_t, \sup_{\Omega\times\{0\}} (\det D^2 \phi -\psi) \} + \frac{C_n (\text{diam}\,\Omega)^{\frac{n}{n+1}} \|\psi_t\|_{L^{n+1}(Q_T)} }{(n+1) (\inf_{Q_T} (u_t +\psi))^{\frac{n-1}{n+1}} }.
\end{split}
\end{equation}
Therefore, we have proved~\eqref{es-t} according to \eqref{eq:ut-lb} and \eqref{eq:ut-ub}.
The strict convexity follows directly from \eqref{es-t} and the smoothness of $\phi$, see~\cite{ca1} for a reference. 

To estimate $|Du|$, it suffices to prove the bound for $|D_{\nu}u|$ on $\partial\Omega\times[0,T]$ for the outward unit normal vector $\nu$, since $|Du|$ is bounded by $|D\phi|$ on $\Omega\times \{0\}$ and tangentially on $\partial\Omega\times[0,T]$.
For any fixed $t_0\in [0,T]$, take a smooth auxiliary function $w(x)$ such that $w=\phi(t_0)$ on $\partial\Omega$, and $\det D^2w\ge \sup_{Q_T}(u_t + \psi)$ in $\Omega$. 
Note that such $w$ does exist, and one example is to assume $w$ with $(-d_{\partial\Omega}+d^2_{\partial\Omega})\delta+\phi(t_0)$ near the boundary for some constant $\delta>0$ large enough.
Then by the classical comparison principle for Monge-Amp\`ere equations, we have $u\ge w$ in $\Omega\times\{t_0\}$.
Thus $D_{\nu} u\le D_{\nu}w\le C$.
Till now we have proved formula~\eqref{es-01}.
\end{proof}

The proof for the second-order derivative estimates on the parabolic boundary follows a similar structure as in Section 2 of~\cite{cns1}. We first establish the boundedness of the second tangential derivatives of $u$ on the boundary, followed by the mixed second derivatives. The boundedness of the second normal derivatives is then derived from these estimates and the equation of $u$.

\begin{thm}\label{thm:2.3}
There is a positive constant $C$ such that
\begin{equation}\label{d2u}
  1/C\le \lambda(D^2u)\le C \quad\text{ on }\partial_pQ_T,
\end{equation}
where $\lambda(D^2u)$ denotes the eigenvalues of $D^2u$ and the constant $C$ depends only on on $n, T$, $\Omega$, $c_0$, $||\phi||_{\tilde C^4(\overline{Q}_T)}$, $\|D \psi\|_{L^\infty( {Q}_T)}$, $\|\psi\|_{L^\infty( {Q}_T)}$, $\|\psi_t\|_{L^{n+1}({Q}_T)}$ and the uniform convexity of $\phi(\cdot,0)$.
\end{thm}

\begin{proof}
By the boundary condition, the matrix norm of $D^2u$ is bounded by $D^2\phi(\cdot,0)$ on $\Omega\times\{0\}$. We also note that by the uniform convexity of $\phi$, there exists a positive constant $C$ such that
$$C^{-1}I_n\le D^2u(x,0)\le CI_n$$
for $x\in\Omega$. Here $C$ depends on $||D^2\phi(\cdot,0)||_{L^{\infty}(\Omega)}$ and the uniform convexity of $\phi(\cdot,0)$.
Let $(x_0,t_0)$ be an arbitrary point on  $\partial\Omega\times[0,T]$, w.l.o.g. $x_0=0$ and $\partial\Omega$ be parameterized by $x_n=\rho(x')$ on a small neighborhood of $0$ with $\rho(0)=0$, $D\rho(0)=0$ and $\Omega \subset \{x_n > 0\}$. 
We claim that
\begin{equation}\label{es-x'x'}
C^{-1}I_{n-1}\le D_{x'x'}u(0,t_0)\le CI_{n-1}\quad\text{ for any } t_0\in [0,T].
\end{equation}
Differentiate the identity $u(x',\rho(x'),t)=\phi(x',\rho(x'),t)$ to obtain
\begin{equation}\label{bdy-1}
  (\nabla_{x'}+\nabla\rho\partial_n)(u-\phi)=0.
\end{equation}
Twice differentiation at point $(0,t_0)$ gives that
$$D_{x'x'}u(0,t_0)=-u_n(0,t_0)D^2\rho(0)+D_{x'x'} \phi(0,t_0)+D^2\rho(0)\phi_n(0,t_0).$$
We can obtain the latter inequality in~\eqref{es-x'x'} from this formula, as $|Du|,|D\phi|$ and $|D^2\phi|$ are bounded.
The former inequality of~\eqref{es-x'x'} follows the same proof as in~\cite{cns1} if we regard $u_t+ \psi$ as a data function at the time $t_0$, and the estimation depends on the maximum and minimum values of $u_t+\psi$, $\Omega$ and $\|\phi\|_{\tilde C^4{(\overline{Q}_T)}}$.

Then we shall prove a uniform bound for the mixed second derivatives of $u$.
Suppose that
\begin{equation}\label{boundary}
\rho(x')=\frac{1}{2}\sum_{\alpha,\beta<n} B_{\alpha\beta}x_{\alpha}x_{\beta}+O(|x'|^3)
\end{equation}
in a neighborhood of $0$. Note that the matrix $B_{\alpha\beta}$ is positive definite because of the uniform convexity of $\partial\Omega$.
Denote the operator
\begin{equation}\label{t-alpha}
\mathcal{T}=\partial_{\alpha}+\sum_{\beta<n}B_{\alpha\beta}( x_{\beta}\partial_{n}-x_n\partial_{\beta}).
\end{equation}
Differentiating equation~\eqref{eq}, we obtain
$$\mathcal{L}\mathcal{T}u= \mathcal T \psi$$
for the linear operator $\mathcal{L}$ defined in \eqref{eq:l}.
Thus, by~\eqref{es-t}, we have
\begin{equation}
\begin{split}
    |\mathcal{L}\mathcal{T}(u-\phi)| &\le |\mathcal T \psi| + |\mathcal{L}\mathcal{T}\phi| \\ 
    & \le C \|D_x\psi\|_{L^\infty(Q_T)} + 
 |(u_t+\psi)u^{ij}\partial_{ij}\mathcal{T}\phi- \partial_t\mathcal{T}\phi| \\
  & \le \underline c\Big(1+\sum_{i=1}^n u^{ii}\Big)
\end{split} 
\end{equation}
for some positive constant $\underline c$ depending on $\Omega$, $\|D_x\psi\|_{L^\infty(Q_T)}$,  $||\phi||_{\tilde{C}^3(\overline{Q}_T)}$ and the positive upper bound for $u_t+\psi$.
Moreover, on $\Omega\times\{0\}$, $\mathcal{T}(u-\phi)=0$
and
  \begin{equation}
  |\mathcal{T}(u-\phi)|\le C |x'|^2|u_n-\phi_n| + \Big| \sum_{\beta<n}B_{\alpha\beta} x_n(u_{\beta}-\phi_{\beta})\Big|\le \overline{c}x_n,\quad\text{on }\partial\Omega\times[0,T],
\end{equation}
by the uniform convexity of $\Omega$ and the boundedness of $|Du|, |D\phi|$.

Consider the following barrier function
$$w(x,t)=-a|x|^2+bx_n.$$
Then $\mathcal{L}w=-2a(u_t+\psi)\sum_{i=1}^n u^{ii}$. Note that
$$\sum_{i=1}^n u^{ii} \ge n\Big(\prod_{i=1}^n u^{ii} \Big)^{1/n}\ge n(\det D^2u)^{-1/n}\ge nC^{-1/n},$$
where $C$ depends on the maximum of $u_t+\psi$.
If we take $a$ large enough, then
$$\mathcal{L}(w\pm \mathcal{T}(u-\phi))\le 0.$$
Since $\Omega$ is uniformly convex and bounded, there exists some universal constant $c$ such that $x_n\ge c|x|^2$ on $\overline{\Omega}$. Thus we could take $b\ge a/c+\overline{c}$ to derive
$$w\pm \mathcal{T}(u-\phi)\ge 0 \text{ on }\partial_pQ_T.$$
By using the comparison principle for $u$ and $w$, we obtain that
$$-w\le \mathcal{T}(u-\phi)\le w\quad \text{ in }\overline{Q}_T.$$
Since $w(0,t)=\mathcal{T}(u-\phi)(0,t)=0$, this implies that
$$|\partial_n\mathcal{T}(u-\phi)(0,t)|=|u_{\alpha n}(0,t)-\phi_{\alpha n}(0,t)|\le b,$$
which yields that
\begin{equation}\label{es-an}
  |u_{\alpha n}(0,t)|\le b+||\phi||_{C^2(\overline{Q}_T)}\le C.
\end{equation}

Finally, we turn to the second normal derivatives of $u$. From the equation~\eqref{eq}, we know that
$$u_t(0,t) + \psi(0,t)=\det D_{x'x'}u(0,t)u_{nn}(0,t)+ P\big(D_{x'x'}u(0,t),D_{x'x_n}u(0,t)\big),$$
where $P$ is a $n$-degree polynomial on $D_{x'x'}u(0,t)$ and $D_{x'x_n}u(0,t)$. Combining formula \eqref{es-t}, \eqref{es-x'x'} and \eqref{es-an}, we obtain a uniform bound for $u_{nn}$ on $\partial\Omega\times[0,T]$.

To conclude, we have proved a uniform bound for the second derivatives of $u$ on $\partial_pQ_T$, i.e.
\begin{equation}\label{es-d2bdy}
  |D^2u|\le C\quad \text{   on }~~\partial_pQ_T.
\end{equation}
Since $u_t+\psi$ has a  positive lower bound, then so does the smallest eigenvalue of $D^2u$ on the parabolic boundary according to \eqref{es-d2bdy}. 
Thus estimate \eqref{d2u} is proved.
\end{proof}

\subsection{Step 2: global $C^2$ estimates}
\hfill

Let $U$ be the Legendre transform of $u$ defined on $Q_T^*=\{(y,t):\, y\in Du(\Omega,t),t\in(0,T]\}$, i.e.,
$$U(y,t)=\sup_{x\in \Omega}(y\cdot x-u(x,t)).$$
Then $U$ satisfies
\begin{equation}\label{eq-v}
  -U_t - \frac1{\det D^2U} = -\psi(DU,t) \quad\text{ in }Q_T^*.
\end{equation}
Note that $Du(\Omega,t)$ is not necessarily convex. 
Here $Q_T^*$ is bounded and simply connected due to the smoothness of $u$ and formula~\eqref{es-01}. By estimates~\eqref{es-t} and~\eqref{d2u} above, we know that
\begin{equation}\label{uv}
C^{-1}\le \det D^2U\le C \quad \text{and}\quad ||D^2U||_{L^{\infty}(\partial_pQ_T^*)}\le C.
\end{equation}

\begin{lem}\label{asl}
If $U\in \tilde C^4(\overline{Q_T^*})$ satisfies equation~\eqref{eq-v},
it holds that $$\sup_{Q_T^*}||D^2U||\le  \sup_{\partial_pQ_T^*}||D^2U||.$$
\end{lem}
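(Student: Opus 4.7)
The strategy is to exploit the concavity of the equation after taking logarithms: rewriting \eqref{eq-v} as
\begin{equation*}
F(D^2 U, U_t) := \log \det D^2 U + \log(-U_t) = 0,
\end{equation*}
which is a concave function of $(D^2 U, U_t)$, the plan is to differentiate twice in an arbitrary unit direction $\xi \in \mathbb{R}^n$ and apply the parabolic maximum principle to $w := U_{\xi\xi}$. Since $U$ is convex in $y$ and $U_t < 0$, one has $D^2 U > 0$, so $\|D^2 U\| = \max_{|\xi|=1} U_{\xi\xi}$; thus a pointwise bound on $U_{\xi\xi}$ uniform in $\xi$ suffices.

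Differentiating the equation once in $\xi$ yields the linearized identity $U^{ij} U_{ij\xi} + U_{t\xi}/U_t = 0$. Differentiating a second time and grouping the pure second derivatives of $w$ on the left assembles the remaining terms into a manifest sum of squares:
\begin{equation*}
U^{ij} w_{ij} + \frac{w_t}{U_t} = U^{ik} U^{jl} U_{ij\xi} U_{kl\xi} + \frac{U_{t\xi}^2}{U_t^2} \geq 0,
\end{equation*}
the first term being a squared Frobenius norm in the metric $U^{ij}$, which is precisely the algebraic manifestation of concavity of $F$. Multiplying through by the negative quantity $U_t$ converts this to the parabolic differential inequality
\begin{equation*}
w_t - (-U_t)\, U^{ij} w_{ij} \leq 0,
\end{equation*}
whose coefficient matrix $(-U_t)\, U^{ij}$ is continuous and strictly positive definite on $\overline{Q_T^*}$.

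The weak parabolic maximum principle, applied to $w$ after the standard perturbation $w - \varepsilon t$ to convert the nonstrict inequality into a strict one, then gives $\sup_{Q_T^*} U_{\xi\xi} \leq \sup_{\partial_p Q_T^*} U_{\xi\xi}$; taking the supremum over unit $\xi$ finishes the proof. The only point that requires a moment's care is that $Q_T^*$ is not a spacetime cylinder—its cross-section $Du(\Omega,t)$ varies with $t$—but this is not a real obstacle, since the weak maximum principle holds on any bounded space-time domain with continuous parabolic coefficients, provided one interprets $\partial_p Q_T^*$ in the usual way as $\overline{Q_T^*}$ minus the set of points possessing a small backward-in-time spacetime neighborhood contained in $Q_T^*$. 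The boundedness of $Q_T^*$ and the regularity $U \in \tilde C^4(\overline{Q_T^*})$ ensure $w \in C(\overline{Q_T^*})$, so the argument goes through without modification.
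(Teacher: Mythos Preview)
Your proof is correct and follows essentially the same approach as the paper: both differentiate the logarithmic form of \eqref{eq-v} twice to obtain $\tilde L\, U_{\xi\xi}\ge 0$ (the paper writes this with $\tilde L=\frac{1}{U_t}\partial_t+U^{ij}\partial_{ij}$ in coordinate directions, you in a general unit direction $\xi$), and then invoke the parabolic maximum principle. Your added remarks on the $\varepsilon t$ perturbation and on the non-cylindrical shape of $Q_T^*$ are points the paper passes over silently, but the core argument is identical.
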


\begin{proof}
Differentiating the equation~\eqref{eq-v}, we have for $k,l=1,\cdots,n$,
$$ - U_{tk} + \frac1{\det D^2U} U^{ij}U_{ijk}= -\psi_i U_{ik}$$
and 
\begin{equation*}
\begin{split}
     - U_{tkl} + \frac1{\det D^2U} U^{ij}U_{ijkl} -\frac{\big(U^{ij}U_{ijk} \big) \big(U^{ij}U_{ijl} \big)}{\det D^2U}  - \frac{U_{ijk}U_{pql}U^{ip}U^{qj}}{\det D^2U}   
    =  -\psi_{ij}U_{ik}U_{jl} - \psi_i U_{ikl}.
\end{split}
\end{equation*}
If we define the linear operator $\tilde{L}= - \partial_t+ \frac1{\det D^2U} U^{ij}\partial_{ij} + \psi_i \partial_i$, then 
\begin{align*}
  \tilde{L}U_{kk}&=\frac1{\det D^2U} \Big(U^{ij}U_{ijk} \Big)^2 + \frac1{\det D^2U} U_{ijk}U_{pqk}U^{ip}U^{qj} -\psi_{ij}U_{ik}U_{jk}.
\end{align*}
Since $U$ is convex and $\psi$ is concave, it holds that $U_{ijk}U_{pqk}U^{ip}U^{qj} \ge 0$ and $-\psi_{ij}U_{ik}U_{jk} \ge 0$. 
Therefore,

\begin{equation}\label{eq:tl-ukk}
    \tilde{L}U_{kk}\ge 0.
\end{equation}
By the comparison principle (\cite[Theorem 2.4]{lie}) for linear parabolic equations, we obtain that
$$U_{kk}\le  \sup_{\partial_pQ_T^*}U_{kk},$$
or,
$$\sup_{Q_T^*}||D^2U||\le \sup_{\partial_pQ_T^*}||D^2U||.$$
\end{proof}

By formula~\eqref{uv} and Lemma~\ref{asl}, the eigenvalues of $D^2u$ have a positive lower bound in $Q_T$. Combined with~\eqref{es-t}, they are also bounded from above. To be more explicit, we have proved the following theorem:

\begin{thm}\label{c2thm}

It holds that
\begin{equation}\label{eq:d2u-c2}
 1/C\le \lambda(D^2u)\le C  \quad\text{ on }~~ \overline{Q}_T,
\end{equation}
where $C$ is a positive constant depending on those universal quantities in Theorem \ref{thm:2.3}.
\end{thm}

\subsection{Step 3: global $\tilde{C}^{2,\alpha}$ estimates}\label{1.3}
\hfill

In this part, we first give the interior $\tilde{C}^{2,\alpha}$ regularity for $u$.

\begin{thm}\label{hi}
Recall that $u\in \tilde{C}^{4}(\overline{Q}_T)$ is a solution to equation~\eqref{eq} and is convex in $x$.
Then, for any $Q\subset\subset Q_T$, there exists $\alpha \in (0,1)$ such that
  $$||u||_{\tilde{C}^{2,\alpha}(\overline{Q})}\le C,$$
where $\alpha, C$ are positive constants depending on those universal quantities in Theorem \ref{thm:2.3} and $\text{dist\,}(Q,Q_T)$.
\end{thm}

\begin{proof}
    Recall that $U_{kk}$ satisfies \eqref{eq:tl-ukk} as $\psi(x,t)$ is concave with respect to $x$.
By using the same method in \cite[Theorem 4.1]{tso}, it holds that $U\in \tilde C^{2,\alpha}(\Omega \times(0,T])$.
Then by \eqref{eq:d2u-c2}, we have the interior regularity for $u$ in $Q_T$.
\end{proof}

Recall that 
\begin{equation*}
  \mathcal L u_t =\psi_t \quad\text{ in } \ Q_T,  
\end{equation*}
where the linear operator $\mathcal L = - \partial_t + \det D^2 u \cdot u^{ij} \partial_{ij} $ is defined in \eqref{eq:l}.
By estimate \eqref{eq:d2u-c2}, the operator $\mathcal L$ is uniformly parabolic.
Thanks to the Krylov-Safonov's estimate for linear parabolic equations (see \cite[Theorem 5.1]{gr} or \cite{KS}), we can obtain the global $\tilde{C}^{\alpha}$ estimate to $u_t$.

\begin{thm}\label{ut}
Under the same conditions as in Theorem~\ref{hi}, we have
$$||u_t||_{\tilde{C}^{\alpha}(\overline{Q}_T)}\le C$$
for some positive constants $C$ and $\alpha$ depending on the uniformly parabolic coefficients of $\mathcal L$ and $\|\psi_t\|_{L^{n+1}(Q_T)}$.
\end{thm}

\begin{rem}
The interior $\tilde{C}^{2,\alpha}$ regularity of $u$ can also be obtained by using the same method in \cite[Proposition 5.3]{CDKL}, once we get the global $C^2$ estimates and the $\tilde{C}^{\alpha}$ estimate for $u_t$.
\end{rem}

We split the proof of the global $\tilde{C}^{2,\alpha}$ estimate into two parts. A short-time existence theorem gives the global $\tilde{C}^{\alpha}$ estimate for $D^2u$ on $\Omega\times(0,2\epsilon]$.
As for the estimate on the rest part of $Q_T$, considering the linear parabolic equation satisfied by tangential derivatives of $u$, we use the approach of Theorem 9.31  in~\cite{gt} to deduce the H\"older regularity of the gradient, which is independent of the moduli of continuity of the principal coefficients.

Let $Y_0=(y_0,t_0)$ be any point on $\partial\Omega\times[\epsilon,T]$ with some constant $\epsilon>0$. Without loss of generality, assume that $y_0=0$, and on a small neighborhood of $0$, $\partial\Omega$ is the image of
$y_n=\rho(y')$ with $\rho(0)=0$ and $D\rho(0)=0$.
Perform the transformation of coordinates $x=\Psi(y)$, and $\Psi:B_{\rho_0}(0)\cap \overline{\Omega}\to \overline{\mathbb{R}_n^+}$, with
$$x'=y',\quad x_n=y_n-\rho(y').$$
Let $$v(x,t)=T_k(u-\phi)(x',x_n+\rho(x'),t)$$
for a fixed $1\le k<n$, where
$$T_i:=\partial_{y_i}+\rho_{y_i}\partial_{y_n}\quad \text{ for }~i<n,\quad T_n=\partial_{y_n}.$$
Take $G=B_{R_0}^+\times(t_0-R_0^2,t_0]$, such that $B_{R_0}^+\subset \operatorname{Im}\Psi$ and $R_0^2<\epsilon$. 
Then by direct computations,
\begin{equation*}
  \begin{aligned}
    \mathcal L (T_ku) & = -\partial_t(T_ku)+ \det D^2 u \cdot u^{ij}(T_ku)_{ij} \\ 
    & = T_k \psi + \det D^2 u \cdot (u^{ij}\rho_{kij}u_n+ u^{ij}\rho_{ki}u_{nj}+ u^{ij}\rho_{kj}u_{ni}) =:\overline{f}.
  \end{aligned}
\end{equation*}
Denote $P_i=\partial_{x_i}-\rho_{x_i}\partial_{x_n}$ for $1\le i< n$ and $P_n=\partial_{x_n}$.
Thus under $(x,t)$ coordinates,
$$-\partial_t v+\det D^2 u \cdot u^{ij}P_iP_jv=Lv-\det D^2 u \cdot u^{ij}\rho_{ij}v_n.$$
Here the linear operator $L=-\partial_t+a^{ij}\partial_{x_ix_j}$ with
$$a^{ij}=\det D^2 u \cdot (u^{ij}+ u^{sl}\rho_s\rho_l\delta_n^i \delta_n^j- u^{sj}\rho_s\delta_n^i- u^{is} \rho_s\delta_n^j), $$
where $\delta_n^i$ equals $1$ when $n=i$, otherwise $0$, and the summation over $s,l$ is from $1$ to $n-1$. 
Note that $P_jv =\partial_{y_j}(T_k(u-\phi))$ and $P_iP_jv =\partial_{y_iy_j}(T_k(u-\phi))$ for $i,j=1,\cdots,n$.

Let
$$f(x,t) = \Big(\det D^2 u \cdot u^{ij}\rho_{ij} \partial_{y_n}(T_k(u-\phi))- \mathcal L(T_k\phi)+\overline{f} \Big)(\Psi^{-1}(x),t).$$
Then, $v$ satisfies the equation
\begin{equation}\label{v-eq}
  \left\{\begin{aligned}
    -v_t+a^{ij}v_{ij}&=f\quad\text{   in }G,\\
    v&=0\quad\text{   on }F,
  \end{aligned}\right.
\end{equation}
where $F=\partial G\cap\{x_n=0\}$ is the flat boundary.

By Lemma~\ref{lem1} and Theorem~\ref{c2thm}, we have
\begin{equation}\label{v-ell}
\lambda I_n\le (a^{ij})\le \Lambda I_n,
\end{equation}
if $\rho_0>0$ small, and
\begin{equation}\label{v-ell-b}
|f|\le C_0,\quad\quad |v|+|Dv|\le C_1 \quad \text{in}\quad \overline{G}
\end{equation}
for some positive constants $\lambda$, $\Lambda$, $C_0$ and $C_1$ depending on those universal quantities in Theorem \ref{thm:2.3}.

Before proving the boundary $\tilde{C}^{1,\alpha}$ regularity of $v$, we first give three lemmas including the weak Harnack inequality, the interior $\tilde C^{1,\alpha}$ regularity of $v$, and a technical lemma.

\begin{lem}\label{v-osc}
Let $v\in \tilde{C}^2(\overline{G})$ be a solution of equation~\eqref{v-eq} with conditions~\eqref{v-ell} and~\eqref{v-ell-b}.
Then there are constants $0<\alpha<1$ and $C>0$ depending only on $n,\lambda,\Lambda$, $\operatorname{diam} G$, $C_0$ and $C_1$, such that for any $Q(X_1,R)\subset G$, $r\le R$,
$$\osc_{Q(X_1,r)}Dv\le C\Big(\frac{r}{R}\Big)^{\alpha}(\osc _{Q(X_1,R)}Dv+R),$$
where $Q(X_1,R)=\{(x,t):|x-x_1|<R,t_1-R^2<t\le t_1\}$ with $X_1=(x_1,t_1)$.
\end{lem}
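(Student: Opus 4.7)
The plan is to establish the oscillation estimate via a Campanato-type iteration on parabolic cylinders of decreasing radius, with the one-step oscillation reduction supplied by the weak Harnack inequality that precedes this lemma in the paper. First I would reduce to the case of the unit cylinder by the parabolic scaling $\tilde v(x,t) = v(x_1 + Rx, t_1 + R^2 t)/R$: this $\tilde v$ solves a uniformly parabolic equation on $Q(0,1)$ with the same ellipticity constants $\lambda, \Lambda$, with rescaled right-hand side bounded by $RC_0$ and with $|\tilde v| + |D\tilde v| \le C_1$. The target estimate then reduces to $\osc_{Q(0,r)} D\tilde v \le C r^\alpha (\osc_{Q(0,1)} D\tilde v + 1)$ for $0 < r \le 1$, which will be obtained by iterating a single-scale reduction.

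The heart of the argument is the one-step inequality: for some $\theta \in (0, 1/2)$ chosen below, show $\osc_{Q(0, \theta)} \tilde v_k \le \mu\, \osc_{Q(0,1)} \tilde v_k + C$ for each coordinate direction $e_k$, with some universal $\mu < 1$. Writing $M_k = \sup_{Q(0, 1)} \tilde v_k$ and $m_k = \inf_{Q(0, 1)} \tilde v_k$, a pigeonhole argument shows that at least one of the nonnegative functions $M_k - \tilde v_k$, $\tilde v_k - m_k$ exceeds $(M_k - m_k)/2$ on a subset of $Q(0, 1)$ of definite Lebesgue measure. Applying the weak Harnack inequality to that function, recognized as a nonnegative supersolution of a parabolic differential inequality, produces $\inf_{Q(0, \theta)}(M_k - \tilde v_k) \ge c(M_k - m_k) - C$ (or the symmetric conclusion), which is exactly the desired oscillation decrement for $\tilde v_k$. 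Summing over $k$ gives the reduction for $D\tilde v$; iterating at dyadic scales $r_j = \theta^j$ and undoing the rescaling yields the stated estimate with $\alpha = \log \mu^{-1}/\log \theta^{-1}$.

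The principal obstacle will be supplying the parabolic differential inequality for $M_k - \tilde v_k$ to which the weak Harnack is actually applied. Since the coefficients $a^{ij}$ of \eqref{v-eq} are only bounded and measurable, simply differentiating the equation in $x_k$ is not available, and direct substitution of $\tilde v_k$ into a linear equation fails. This is the role I expect the technical lemma promised in the paper's preamble to play: by exploiting the $\tilde C^2$ regularity of $v$ (so that $\tilde v_k$ is a genuine $\tilde C^1$ function) together with the a priori bound $|\tilde v| + |D\tilde v| \le C_1$, one should recognize $M_k - \tilde v_k$ as a nonnegative Pucci-type supersolution, with the coefficient-nondifferentiability errors absorbed into an inhomogeneity controlled by $C_0$, $C_1$, $\lambda$, $\Lambda$, and $\operatorname{diam} G$. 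Once this reformulation is in hand, the weak Harnack produces the one-step decrement and the Campanato iteration is routine.
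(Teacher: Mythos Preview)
The paper does not actually prove this lemma; it is simply quoted as a consequence of Theorem~12.1 in Lieberman~\cite{lie}, with no argument supplied. So there is no paper proof to compare your proposal against, and your outline must stand on its own.

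There is a genuine gap exactly where you flag it. To run the weak Harnack step you need $M_k-\tilde v_k$ (and $\tilde v_k-m_k$) to be nonnegative supersolutions of a uniformly parabolic inequality with controlled right-hand side; with only bounded measurable $a^{ij}$ you cannot produce this, since differentiating \eqref{v-eq} in $x_k$ introduces the uncontrolled term $(\partial_k a^{ij})\,v_{ij}$. Your hope that the paper's ``technical lemma'' supplies the missing ingredient is mistaken: that lemma is Lemma~\ref{v-bri}, which converts an oscillation decay for $Dw$ into a pointwise bound on $|Dw-\ell|$ via $\sup|w-\ell\cdot x-a|$. It is used only in the boundary argument of Theorem~\ref{t2.11} and says nothing whatsoever about $v_k$ satisfying a differential inequality. (A minor point: Lemma~\ref{v-whn} is stated \emph{after} the present lemma in the paper, not before.)

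The cited result in \cite{lie} is a H\"older gradient estimate for \emph{quasilinear} parabolic equations $-u_t+a^{ij}(X,u,Du)u_{ij}+a(X,u,Du)=0$, and its proof genuinely exploits the dependence of the principal coefficients on $Du$. In the paper's application this structure is present implicitly: the $a^{ij}$ in \eqref{v-eq} are built from $u_t u^{ij}$, hence from $D^2u$, and the gradients of the full family $v^{(l)}=T_l(u-\phi)$, $l=1,\dots,n$, together recover $D^2u$; so the diagonal system for $(v^{(1)},\dots,v^{(n)})$ is quasilinear in the sense required. Treating \eqref{v-eq} as an isolated linear equation with arbitrary bounded measurable coefficients, as your proposal does, one should not expect the interior $C^{1,\alpha}$ estimate to hold at all, and no amount of reformulation as a ``Pucci-type supersolution'' will manufacture it.
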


This lemma is derived from Theorem 12.3 in~\cite{lie}. Note that this lemma  reveals the interior $\tilde{C}^{1,\alpha}$ estimate of $v$. The next lemma is the weak Harnack inequality for linear parabolic equations, borrowed from Lemma 7.37 in~\cite{lie}.

\begin{lem}\label{v-whn}
Under the same conditions as in Lemma~\ref{v-osc}. Suppose also that $v$ is nonnegative. Then for any $Q(X_1,4R)\subset G$,  we have
  $$\Big(R^{-n-2}\int_{\Theta(X_1,R)}v^p\,dX\Big)^{1/p}\le C\Big(\inf_{Q(X_1,R)}v+R^{\frac{n}{n+1}}||f||_{L^{n+1}(G)}\Big).$$
  Here $\Theta(X_1,R)=Q((x_1,t_1-4R^2),R)$ and $p, C$ are positive constants depending only on $n,\lambda,\Lambda$.
\end{lem}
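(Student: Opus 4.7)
The plan is to reduce the lemma to a direct application of the classical Krylov--Safonov weak Harnack inequality for linear uniformly parabolic equations in non-divergence form, as packaged in Lemma 7.37 of \cite{lie}. Equation \eqref{v-eq} already has the required form $-v_t + a^{ij}v_{ij} = f$ with no lower-order coefficients, so the task reduces to verifying that the structural hypotheses of that general result are met in our setting, and then quoting it.

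First, I would check uniform parabolicity: by \eqref{v-ell} we have $\lambda I_n \le (a^{ij}) \le \Lambda I_n$ throughout $G$, with constants depending only on the data, and no regularity is assumed on $a^{ij}$ beyond measurability -- which is exactly the framework of Krylov--Safonov. Second, the inhomogeneous term satisfies $|f| \le C_0$ on $G$ by \eqref{v-ell-b}, hence $f \in L^\infty(G) \subset L^{n+1}(G)$ with norm controlled by $C_0|G|^{1/(n+1)}$ and thus by the structural constants. Third, the hypothesis $v \ge 0$ on $G$ is assumed in the statement, and the inclusion $Q(X_1,4R) \subset G$ guarantees that the enlarged cylinder used in the parabolic covering arguments still lies inside the domain where the equation holds.

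With these hypotheses in hand, Lemma 7.37 of \cite{lie} applies verbatim and yields
$$\Big(R^{-n-2}\int_{\Theta(X_1,R)} v^p\,dX\Big)^{1/p}\le C\Big(\inf_{Q(X_1,R)} v+R^{n/(n+1)}\|f\|_{L^{n+1}(G)}\Big),$$
with exponent $p>0$ and constant $C$ depending only on $n, \lambda, \Lambda$, exactly as claimed. The heavy lifting in \cite{lie} (parabolic Aleksandrov--Bakelman--Pucci estimate, measure-decay on lower level sets via a parabolic Vitali/Calder\'on--Zygmund covering, and a Krylov--Tso type iteration converting this into an $L^p$--$L^\infty$ bound) is standard and need not be reproduced; the only honest work for us was already done upstream, in setting up the flattening $\Psi$ so that \eqref{v-ell} and \eqref{v-ell-b} hold with constants independent of the boundary point $Y_0$. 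Hence there is no genuine obstacle at this step -- the lemma is recorded here only because the particular form of the right-hand side (weighted $L^{n+1}$ norm of $f$ rather than an $L^\infty$ norm) is what we shall need in the subsequent boundary H\"older estimate.
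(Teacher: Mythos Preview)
Your proposal is correct and matches the paper's approach exactly: the paper does not give a proof of this lemma but simply states that it is ``borrowed from Lemma 7.37 in~\cite{lie}.'' Your verification of the structural hypotheses (uniform parabolicity from~\eqref{v-ell}, $f\in L^{n+1}$ from~\eqref{v-ell-b}, nonnegativity, and the cylinder inclusion) is in fact more explicit than what the paper records.
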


Additionally, the next lemma is Lemma 12.4 in~\cite{lie}.

\begin{lem}\label{v-bri}
Let $w\in C(\overline{Q})$ and $Dw\in C(Q)$ for some cylinder $Q=Q(X_0,R)$. Suppose that there are nonnegative constants $\bar C_1, \bar C_2$ and $\alpha<1$ such that
  $$\osc\limits_{Q(X_1,r)}Dw\le \bar C_1 \Big(\frac{r}{\mu}\Big)^{\alpha}(\osc\limits_{Q(X_1,\mu)}Dw +\bar C_2 \mu^{\alpha}),$$
  whenever $Q(X_1,\mu)\subset Q$ and $r\le \mu$. Then for any $\ell\in \mathbb{R}^n$ and $a\in \mathbb{R}$, we have
  $$\sup_{Q(X_0, R/2)}|Dw-\ell| \le C(\bar C_1, n , \alpha)\Big(R^{-1}\sup_Q|w(X)-\ell \cdot x-a|+\bar C_2 R^\alpha\Big).$$
\end{lem}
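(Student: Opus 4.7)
My plan is to reduce to the case $\ell = 0$, $a = 0$, establish a pointwise estimate on $|Dw|$ at any interior point by combining the fundamental theorem of calculus with the oscillation hypothesis, and then close the estimate via a standard iteration/absorption argument on nested cylinders. The first reduction is immediate: setting $\tilde w(X) := w(X) - \ell\cdot x - a$, one has $D\tilde w = Dw - \ell$ and $\osc D\tilde w = \osc Dw$, so the hypothesis is unchanged for $\tilde w$, and it suffices to show
$$\sup_{Q(X_0, R/2)} |D\tilde w| \le C\bigl(R^{-1}\sup_Q |\tilde w| + \bar C_2 R^\alpha\bigr).$$

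For the pointwise estimate, I would fix $X_1 = (x_1, t_1) \in Q(X_0, \rho)$ with $0 \le \rho < R$, a unit vector $e \in \mathbb{R}^n$, and $r \in (0, R - \rho]$, so that the segment $\{(x_1 + sre, t_1) : s \in [0, 1]\}$ lies in $Q$. The fundamental theorem of calculus in $x$ with $t_1$ frozen, after adding and subtracting $rD\tilde w(X_1)\cdot e$ in the integrand and bounding the result by the oscillation of $D\tilde w$ on $Q(X_1, r)$, gives
$$r|D\tilde w(X_1)\cdot e| \le 2\sup_{Q(X_1, r)}|\tilde w| + r\osc_{Q(X_1, r)} D\tilde w.$$
For any $\mu \in [r, R - \rho]$ we have $Q(X_1, \mu) \subset Q(X_0, \rho + \mu) \subset Q$, so the hypothesized decay together with $\osc D\tilde w \le 2 \sup |D\tilde w|$ yields, upon taking the supremum over $X_1 \in Q(X_0, \rho)$ and over $e$,
$$\phi(\rho) \le \frac{2}{r}\sup_Q |\tilde w| + 2\bar C_1 \Bigl(\frac{r}{\mu}\Bigr)^{\alpha} \phi(\rho + \mu) + \bar C_1 \bar C_2 R^{\alpha},$$
where $\phi(\rho) := \sup_{Q(X_0, \rho)} |D\tilde w|$ is finite for every $\rho < R$ because $Dw \in C(Q)$.

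To close the argument, I would set $\mu = s - \rho$ for arbitrary $R/2 \le \rho < s \le R$ and choose $r = \epsilon(s - \rho)$ with $\epsilon = \epsilon(\bar C_1, \alpha) > 0$ small enough that $2 \bar C_1 \epsilon^{\alpha} \le 1/2$. The inequality then becomes
$$\phi(\rho) \le \frac{C}{s - \rho}\sup_Q |\tilde w| + \frac{1}{2}\phi(s) + C\bar C_2 R^{\alpha}, \qquad R/2 \le \rho < s \le R,$$
and the standard iteration lemma (e.g., as in Chapter 8 of~\cite{gt}) applied on a closed subinterval such as $[R/2, 3R/4] \subset [R/2, R)$, where $\phi$ is bounded, produces
$$\phi(R/2) \le C(\bar C_1, n, \alpha)\bigl(R^{-1}\sup_Q |\tilde w| + \bar C_2 R^\alpha\bigr),$$
which is the claim. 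The main obstacle is precisely this last step: because the hypothesis provides no a priori upper bound on $\sup_Q |Dw|$ (which could in principle blow up near $\partial_p Q$), the constant in the output must be made independent of any such bound, and this is achievable only after the coefficient of $\phi(s)$ has been absorbed via the choice of $\epsilon$.
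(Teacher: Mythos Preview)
Your proof is correct. The paper itself does not prove this lemma, simply citing it as Lemma~12.4 in Lieberman~\cite{lie}; your argument --- reduce to $\ell=0$ and $a=0$, bound $|D\tilde w(X_1)|$ via the fundamental theorem of calculus by $r^{-1}\sup|\tilde w|+\osc_{Q(X_1,r)}D\tilde w$, invoke the oscillation hypothesis with $r=\epsilon\mu$ to make the coefficient of $\phi(\rho+\mu)$ equal to $\tfrac12$, and then absorb via the standard iteration on a compact subinterval of $[R/2,R)$ --- is essentially the classical proof found there, including the correct attention to the finiteness of $\phi$ away from $\partial_pQ$.
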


We will use Lemma~\ref{v-whn} to prove an oscillation estimate for $\frac{v}{x_n}$, which gives the H\"older estimate for $v_n$ on $F$, the flat boundary.
Combining this with the interior estimates, Lemma~\ref{v-osc}, and with the help of Lemma~\ref{v-bri}, we will derive a global $\tilde{C}^{1,\alpha}$ estimate for $v$ in $\frac12 G := B_{R_0/2}^+\times(t_0-R_0^2/4,t_0]$.
Recall the definition of $v$, then the global H\"older estimate of $D^2u$ on $\Omega\times(\epsilon,T]$ follows from equation \eqref{eq}.

\begin{thm}\label{t2.11}
Let $v\in \tilde{C}^2(\overline{G})$ be a solution of equation~\eqref{v-eq} with conditions~\eqref{v-ell} and~\eqref{v-ell-b}.
Then there are small positive constants $r_0$ and $\delta$ depending on $n,\lambda,\Lambda, R_0$ and $C_0$ such that for all $r\in(0, r_0)$ and
$X_1\in Q_{r,\delta}=\{(x',x_n,t):|x'|<r,0<x_n<\delta r, t_0-r^2<t\le t_0\},$
we have
  $$|Dv(X_1)-Dv(0,t_0)|\le Cr^{\alpha},$$
for some positive constants $C$ and $\alpha<1$ depending on $n, C_0,C_1,\lambda,\Lambda,r_0,\delta$ and $R_0$.
\end{thm}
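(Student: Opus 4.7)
The plan is the one sketched in the paragraph just above the theorem: first show that the normal derivative $v_n$ is H\"older continuous on the flat face $F=\overline{G}\cap\{x_n=0\}$ by means of the weak Harnack inequality Lemma~\ref{v-whn}, and then combine this boundary estimate with the interior $\tilde C^{1,\alpha}$ control of Lemma~\ref{v-osc} through the bridging lemma Lemma~\ref{v-bri} to get pointwise $\tilde C^{1,\alpha}$ control of the full gradient $Dv$ in the sliver $Q_{r,\delta}$ above $(0,t_0)$.

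The heart of the proof is an oscillation decay for $v/x_n$ up to $F$. Since $v\equiv 0$ on $F$, the quotient $v/x_n$ extends continuously to $F$ with boundary value $v_n$ and is uniformly bounded by $C_1$. For small $r>0$, set $Q_r^+:=(B_r\cap\{x_n>0\})\times(t_0-r^2,t_0]$ and
\begin{equation*}
  M(r):=\sup_{Q_r^+}\frac{v}{x_n},\qquad m(r):=\inf_{Q_r^+}\frac{v}{x_n}.
\end{equation*}
The two auxiliary functions $w_1:=M(4r)x_n-v$ and $w_2:=v-m(4r)x_n$ are nonnegative in $Q_{4r}^+$, and since the affine piece $M(4r)x_n$ is time-independent with vanishing Hessian, each satisfies the same equation as $v$ up to a sign flip of $f$, so $|-\partial_t w_i+a^{ij}(w_i)_{ij}|\le C_0$. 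I would apply Lemma~\ref{v-whn} to each $w_i$ on a cylinder of radius $\sim r$ centered at a point of height $x_n^{\ast}\asymp r$ (so that the cylinder lies in the interior of $G$), and combine the two resulting weak Harnack inequalities in the standard Krylov--Safonov fashion to deduce
\begin{equation*}
  M(r)-m(r)\le\theta\bigl(M(4r)-m(4r)\bigr)+Cr
\end{equation*}
for some $\theta\in(0,1)$. Iteration then yields $\osc_{Q_r^+}v/x_n\le Cr^{\mu}$ for some $\mu\in(0,1)$, and passing $x_n\to 0$ gives the H\"older continuity of $v_n$ on $F$ near $(0,t_0)$.

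For the assembly step, given $X_1=(x_1',x_{1,n},t_1)\in Q_{r,\delta}$ let $X_1^{\ast}=(x_1',0,t_1)\in F$ be the normal projection and set $\mu_1:=x_{1,n}/2$, so that the parabolic cylinder $Q(X_1,\mu_1)$ sits strictly inside $G$. Lemma~\ref{v-osc} applied in $Q(X_1,\mu_1)$ supplies the hypothesis of Lemma~\ref{v-bri}, which, with $\ell=Dv(X_1^{\ast})$ and an appropriate constant $a$, controls $|Dv(X_1)-\ell|$ by $\mu_1^{-1}\sup_{Q(X_1,\mu_1)}|v-\ell\cdot x-a|$ up to an admissible error; the supremum term is estimated using $v\equiv 0$ on $F$, the bound $|Dv|\le C_1$, and the H\"older continuity of $v_n$ on $F$ from the previous step. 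Combining this with the H\"older bound $|Dv(X_1^{\ast})-Dv(0,t_0)|\le Cr^{\mu}$, which uses the previous step together with the vanishing of the tangential derivatives of $v$ on $F$, yields $|Dv(X_1)-Dv(0,t_0)|\le Cr^{\alpha}$ for some $\alpha\in(0,\mu)$. The principal obstacle is the oscillation decay step: $v/x_n$ is not itself a solution of the equation, so Lemma~\ref{v-whn} cannot be applied to it directly and one is forced to pass to the nonnegative auxiliaries $w_1,w_2$; moreover, Lemma~\ref{v-whn} requires the cylinder to be compactly contained in $G$, which forces one to work with slightly offset cylinders and is precisely why the smallness parameter $\delta$ enters the definition of $Q_{r,\delta}$, so that $x_{1,n}\asymp\operatorname{dist}(X_1,F)$ is compatible with the cylinder placements throughout the argument.
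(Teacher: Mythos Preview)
Your overall architecture is the same as the paper's---oscillation decay for $v/x_n$ via the weak Harnack inequality applied to $Mx_n-v$ and $v-mx_n$, followed by the interior--boundary assembly through Lemmas~\ref{v-osc} and~\ref{v-bri}---and the assembly step is handled correctly. But the oscillation decay step has a real gap.

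Applying Lemma~\ref{v-whn} to $w_1=M(4r)x_n-v$ on a cylinder centered at height $x_n^\ast\asymp r$ only controls $\inf w_1$ on that \emph{interior} cylinder, hence only $M(4r)-\sup_{\text{int}}v/x_n$. To get decay of $\osc v/x_n$ on a region reaching down to $x_n=0$ you need $M(4r)-\sup_{Q_r^+}v/x_n$, and since $\sup_{Q_r^+}\ge\sup_{\text{int}}$ the inequality goes the wrong way. Simply ``combining in the standard Krylov--Safonov fashion'' does not close this; the Krylov \emph{boundary} argument requires an additional ingredient that you have omitted. In the paper this is the barrier inequality~\eqref{in}: one constructs an explicit subsolution
\[
w(x,t)=\Big[1-|x'|^2+(1+\sup|f|)\tfrac{x_n-\delta}{\sqrt\delta}+t-t_0\Big]x_n
\]
in the thin slab $Q_{1,\delta}$ and uses comparison to show that $\inf_{F_{R,\delta}}v/x_n$ (on the top face, where the weak Harnack cylinder $Q^*_{R,\delta}$ lives) controls $\inf_{Q_{R/2,\delta}}v/x_n$ on the full slab down to $x_n=0$. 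Only after this transfer step do the two weak Harnack inequalities combine to give the oscillation decay~\eqref{ys}. Note also that the smallness of $\delta$ is dictated by this barrier construction (to force $Lw\ge Lv$), not merely by the need to fit the weak Harnack cylinders inside $G$ as you suggest; correspondingly, the natural regions are the thin slabs $Q_{R,\delta}=\{|x'|<R,\,0<x_n<\delta R,\,t_0-R^2<t\le t_0\}$ rather than the half-balls $Q_r^+$ you use.
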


\begin{proof}
Take $r<r_0$ small such that $r_0+r_0/\delta\le 1/2R_0$. $\delta<1$ is a small constant to be chosen later.
For any $X_1=(x_1,t_1)\in Q_{r,\delta}$, let $\ell=Dv(X_1')$, where $X_1'=(x_1',0,t_1)=(x_{1,1},\cdots,x_{1,n-1},0,t_1)\in F$.
Since $v=0$ on $F$, $\ell=(0,\cdots,0,\ell_n)$. 
Take any point $$X=(x,t)\in Q(X_1,\mu)=\{|x-x_1|<\mu,t_1-\mu^2<t\le t_1\}\quad \text{with $\mu=\frac12x_{1,n}$}.$$
If we take $\sigma = r+\mu/\delta$, then $\sigma<2r$ and
$X,X_1'\in Q_{\sigma,\delta} \subset Q_{R_0/2,\delta}$.
Consider the function $v(X)-\ell\cdot x$, then by Lemma~\ref{v-osc} and Lemma~\ref{v-bri},
\begin{equation}\label{it}
\begin{split}
    |Dv(X_1)-Dv(X_1')|&\le\sup_{Q(X_1,\mu/2)}|Dv-\ell| \\
    &\le C\Big(\mu^{-1}\sup_{Q(X_1,\mu)}|v(X)-\ell\cdot x|+\mu^{\alpha}\Big).
\end{split}
\end{equation}

We claim that
\begin{equation}\label{claim}
\osc\limits_{Q_{\sigma,\delta}}\frac{v(X)}{x_n}\le C\sigma^{\alpha}.
\end{equation}
  Then since $v|_F=0$ and $X,X_1'\in Q_{\sigma,\delta}$, we get
  $$\Big|\frac{v(X)}{x_n}-\ell_n\Big|\le \osc\limits_{Q_{\sigma,\delta}}\frac{v(X)}{x_n}\le C\sigma^{\alpha}.$$
  Thus~\eqref{it} gives
  \begin{equation*}
    \begin{aligned}
      |Dv(X_1)-Dv(X_1')|&\le C\Big(\mu^{-1}\sup_{Q(X_1,\mu)}Cx_n\sigma^{\alpha}+ \mu^{\alpha}\Big)\\
      &\le C(\mu^{-1}\sigma^{\alpha}(x_{1,n}+\mu)+\mu^{\alpha})\\
      &\le C(\sigma^{\alpha}+\mu^{\alpha})\\
      &\le Cr^{\alpha},
    \end{aligned}
  \end{equation*}
  where we use the fact that $\mu=\frac12x_{1,n} < \delta r$ and $\sigma < 2r$.
  Again by the claim, 
  $$|Dv(X_1')-Dv(0,t_0)|\le C\sigma^{\alpha}\le Cr^{\alpha},$$ 
  then we have proved the theorem by the triangle inequality. For the rest part of this proof, we will show the claim~\eqref{claim}.

First suppose that $v\ge 0$ and denote 
$$F_{R,\delta}=:\{(x,t): |x'|<R,x_n=\delta R, t_0-R^2<t\le t_0\}.$$
We will prove the following inequality
  \begin{equation}\label{in}
    \inf_{F_{R,\delta}}\frac{v}{x_n}\le 10\Big(\inf_{Q_{R/2,\delta}}\frac{v}{x_n} +R \sup |f|\Big)\quad \text{ for $R<1/4R_0$.}
  \end{equation}
 Without loss of generality, assume that
 \begin{equation}\label{as}
  R=1,\;\inf_{F_{1,\delta}}\frac{v}{x_n}=1.
 \end{equation}
 Let $$w(x,t)=\left[1-|x'|^2+(1+\sup|f|)\frac{x_n- \delta}{\sqrt{\delta}}+t-t_0\right]x_n,$$
then for $L=-\partial_t+a^{ij}\partial_{ij}$, we have
$$Lw\ge- \delta+\frac{2(1+\sup|f|)}{\sqrt{\delta}}\lambda -c(n) \Lambda(1+\delta) \quad\text{in $Q_{1,\delta}$.}$$
Hence, $Lw\ge \sup |f|\ge Lv$ in $Q_{1,\delta}$, when $\delta= \delta(\lambda, \Lambda, c(n))>0$ is small enough.
On $\{x_n=0\} \cap \partial_pQ_{1,\delta}$, $w=0=v$. On $\{x_n=\delta\}\cap \partial_pQ_{1,\delta}$, $w\le \delta(1-|x'|^2)\le\delta \le v$ by~\eqref{as}. 
Additionally, on $\{|x'|=1\}\cap \partial_pQ_{1,\delta}$ and $\{t=t_0-1\}\cap \partial_pQ_{1,\delta}$, $w<0 \le v$. Thus by maximal principle, we have
$w\le v$ in $Q_{1,\delta}$.
Therefore, in $Q_{1/2,\delta}$,
\begin{equation*}
  \begin{aligned}
 \frac{v}{x_n}&\ge 1-|x'|^2+(1+\sup |f|)\frac{x_n\delta}{\sqrt{\delta}}+t-t_0\\
    &\ge 1-\frac{1}{4}-\sqrt{\delta}(1+\sup|f|)-\frac{1}{4}\\
    &\ge \frac{1}{10}-\sup|f|.
  \end{aligned}
\end{equation*}
when $\delta<1/100$.
Now the constant $\delta$ is determined, so does $r_0$.

Next, without the assumption~\eqref{as}, let $$m_1=\inf_{
F_{R,\delta}}\frac{v}{x_n},\quad\text{and}\quad \tilde{v}(x,t)=\frac{v(Rx,R^2(t-t_0)+t_0)}{m_1R}.$$
Then $-\tilde{v}_t+ a^{ij}\tilde{v}_{ij}=\tilde{f}$ in $Q_{1,\delta}$, with 
$\tilde{f}=\frac{R}{m_1}f$.
Note that $\inf_{F_{1,\delta}}\frac{\tilde{v}}{x_n}=1$. Then from the above arguments,
$$\frac{\tilde{v}(x,t)}{x_n}\ge 1/10-\sup|\tilde{f}|.$$
Hence in the original coordinates, we have
$$\frac{v}{x_n}\ge \frac{m_1}{10}- R \sup|f|\quad \text{in $Q_{R/2,\delta}$,}$$
which yields~\eqref{in}.

Denote
$$Q_{R,\delta}^*=\{|x'|<R,\frac{1}{2}\delta R<x_n<\delta R,t_0-R^2<t\le t_0\},$$
$$Q'_{R,\delta}=\{|x'|<R,\frac{1}{2}\delta R<x_n<\delta R,t_0-3R^2<t\le t_0-2R^2\},$$
and
$$m=\inf_{Q_{2R,\delta}}\frac{v}{x_n}, \quad\quad M=\sup_{Q_{2R,\delta}}\frac{v}{x_n}.$$
Then $Mx_n-v\ge 0$, $v-mx_n\ge 0$ in $Q_{2R,\delta}$.
Applying Lemma~\ref{v-whn}, the weak Harnack inequality, to $Mx_n-v$ and $v-mx_n$ in $Q^*_{R,\delta}$, we have
\begin{equation}\label{m-sm}
  \begin{split}
    \Big(R^{-n-2}\int_{Q'_{R,\delta}}(Mx_n-v)^p\Big)^{1/p} &\le C\Big(\inf_{Q^*_{R,\delta}}(Mx_n-v)+R^2\sup |f|\Big)\\
    &\le C(R\inf_{F_{R,\delta}} \Big(M-\frac{v}{x_n})+R^2\sup|f|\Big)\\
    &\le C\Big(R\inf_{Q_{R/2,\delta}}(M-\frac{v}{x_n})+R^2\sup|f|\Big)\\
    &\le CR\Big(M-\sup_{Q_{R/2,\delta}}\frac{v}{x_n}+R\sup|f|\Big).
  \end{split}
\end{equation}
The third inequality above is deduced by~\eqref{in}.
Similarly we have
\begin{equation}\label{m-bm}
\Big(R^{-n-2}\int_{Q'_{R,\delta}}(v-mx_n)^p\Big)^{1/p}\le CR\Big(\inf_{Q_{R/2,\delta}}\frac{v}{x_n}-m+R\sup|f|\Big).
\end{equation}
Add equations~\eqref{m-sm} and~\eqref{m-bm} up, and by Minkowski inequality,
$$(M-m)\Big(R^{-n-2}\int_{Q'_{R,\delta}}x_n^p\Big)^{1/p}\le CR\Big(M-m-\osc\limits_{Q_{R/2,\delta}}\frac{v}{x_n}+ R\sup|f|\Big).$$
Direct computations give that
$$\osc\limits_{Q_{R/2,\delta}}\frac{v}{x_n}\le C\Big(\osc\limits_{Q_{2R,\delta}}\frac{v}{x_n}+R\sup|f|\Big).$$
Then it follows from the classical iterative arguments (see Lemma 8.23 in~\cite{gt} for a reference) that
\begin{equation}\label{ys}
  \osc\limits_{Q_{R,\delta}}\frac{v}{x_n}\le C\Big(\frac{R}{R_0}\Big)^{\beta}\Big(\osc\limits_{Q_{R_0, \delta}}\frac{v}{x_n}+R_0\sup|f|\Big)
\end{equation}
for any $R<R_0$ and some $\beta<1$ depending on $n,C_0,C_1,\lambda,\Lambda,\delta$. We still use $\alpha$ to denote $\min\{\alpha,\beta\}$. Since $|v|+|Dv|$ and $|f|$ are bounded in $\overline{G}$, the claim~\eqref{claim} is now proved.
\end{proof}

Since $v|_F=0$, the inequality~\eqref{ys} implies the H\"older continuity of $Dv$ on $F$. With the above theorem, we infer that,
$$
  |Dv(X)-Dv(Y)|\le C(|x-y|+|t-s|^{1/2})^{\alpha}
$$
for any $X\in G$ and $Y\in F$. The $\tilde{C}^{\alpha}$ estimate for $Dv$ on $\overline{G}$ then follows from the triangle inequality and Lemma~\ref{v-osc}.

Reverting to $u$, we have

\begin{cor}\label{up}

For any $\epsilon>0$, there exist positive constants $C$ and $\alpha$ such that
$$||D^2u||_{\tilde{C}^{\alpha}(\overline{\Omega}\times(\epsilon,T])}\le C,$$
where the constants $C$ and $\alpha$ depend only on $\epsilon^{-1}$ and those universal quantities in Theorem \ref{thm:2.3}.
\end{cor}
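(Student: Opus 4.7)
The plan is to assemble the corollary from the ingredients established earlier by a finite covering of $\overline{\Omega}\times[\epsilon,T]$. Near any lateral boundary point $Y_0=(y_0,t_0)$ with $t_0\ge\epsilon$, the analysis preceding the corollary (Theorem~\ref{t2.11} together with the interior bound Lemma~\ref{v-osc}) gives an estimate $\|Dv\|_{\tilde C^{\alpha}(\overline{G/2})}\le C$ for $v=T_k(u-\phi)$ with each tangential operator $T_k$, $1\le k<n$. Pulling back the change of variables $x=\Psi(y)$, this translates into a $\tilde C^\alpha$ bound on all second derivatives $D_iD_{\tau_k}u$ where $\tau_k$ is tangential to $\partial\Omega$; equivalently, every entry $u_{ij}$ of $D^2u$ with $\min(i,j)<n$ (in the local straightened frame) lies in $\tilde C^\alpha$.

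The main obstacle is that the barrier argument in Theorem~\ref{t2.11} cannot directly produce a modulus for the pure normal second derivative $u_{nn}$. To recover it, I will use the PDE: expanding the determinant along the $n$-th row,
\begin{equation*}
u_t=\det D^2u=u_{nn}\,M_{nn}+\sum_{j<n}(-1)^{n+j}u_{nj}M_{nj},
\end{equation*}
where $M_{nj}$ is the $(n,j)$ minor. Every minor $M_{nj}$ involves only entries $u_{ik}$ with $i<n$, hence with $\min(i,k)<n$, so it is $\tilde C^\alpha$ by the previous step; similarly $u_{nj}$ for $j<n$ is $\tilde C^\alpha$. From Theorem~\ref{c2thm} the matrix $D^2u$ is uniformly positive-definite, so in particular $M_{nn}=\det D_{x'x'}u$ admits a positive lower bound. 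Combining this with the $\tilde C^\alpha$ estimate for $u_t$ provided by Theorem~\ref{ut}, one may solve
\begin{equation*}
u_{nn}=\frac{u_t-\sum_{j<n}(-1)^{n+j}u_{nj}M_{nj}}{M_{nn}}
\end{equation*}
to deduce $u_{nn}\in \tilde C^\alpha$ in a neighborhood of $Y_0$. Reversing $\Psi$ keeps $\alpha$ unchanged (since $\partial\Omega$ is smooth) and yields a $\tilde C^\alpha$ estimate for $D^2u$ on a parabolic neighborhood of $Y_0$ in $\overline\Omega\times[\epsilon,T]$.

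For interior points, the higher regularity statement of Theorem~\ref{hi} already gives a $\tilde C^\alpha$ bound on $D^2u$ away from $\partial_pQ_T$. Covering $\overline{\Omega}\times[2\epsilon,T]$ by finitely many such boundary neighborhoods together with an interior region whose distance to $\partial\Omega\times[\epsilon,T]$ is controlled, and taking the worst exponent, yields the stated estimate on $\overline{\Omega}\times[2\epsilon,T]$. The remaining strip $\overline{\Omega}\times(\epsilon,2\epsilon]$ is covered by a short-time existence theorem (invoked later in the paper), which provides the $\tilde C^{2,\alpha}$ bound near the initial time. Taking $\epsilon$ as half of the original parameter, the two estimates combine to give the corollary, with constants depending only on $n,\epsilon,\Omega,c_0,\|\phi\|_{\tilde C^4(\overline Q_T)}$ and the uniform convexity of $\phi(\cdot,0)$.
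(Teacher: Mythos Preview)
Your approach is the paper's: obtain $\tilde C^\alpha$ control on the tangential-anything second derivatives from Theorem~\ref{t2.11} and Lemma~\ref{v-osc}, recover the pure normal second derivative from the equation using the $\tilde C^\alpha$ bound on $u_t$ (Theorem~\ref{ut}) and the uniform ellipticity (Theorem~\ref{c2thm}), and patch with the interior estimate (Theorem~\ref{hi}).

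Two points deserve tightening. First, the cofactor expansion as written mixes coordinate frames. What you actually know is that $\tilde u_{kl}=T_kT_lu$ (the Hessian in the straightened frame) is $\tilde C^\alpha$ when $\min(k,l)<n$; but in that frame the equation is $\tilde u_t=\det(P_iP_j\tilde u)$, not $\tilde u_t=\det D^2\tilde u$. Conversely, in the original $y$-coordinates the equation is $u_t=\det D^2u$, but $T_nT_ku=u_{kn}+\rho_k u_{nn}$ contains $u_{nn}$, so you do not yet have $u_{kn}\in\tilde C^\alpha$ separately and your minor argument stalls. The paper resolves this by staying in the straightened frame and observing that $\det(P_iP_j\tilde u)=Q_1\tilde u_{nn}+Q_2$ with $Q_1,Q_2$ polynomials in $\tilde u_{kl}$, $k+l<2n$, and $Q_1$ bounded below; your division argument then goes through verbatim.

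Second, the short-time existence detour is unnecessary for this corollary. The boundary analysis applies at any $Y_0\in\partial\Omega\times[\epsilon,T]$ (choosing $R_0^2<\epsilon$ keeps $G\subset Q_T$), and together with Theorem~\ref{hi} this already covers all of $\overline\Omega\times(\epsilon,T]$; the paper does not invoke short-time existence here, reserving it for the proof of Theorem~\ref{0-pri}.
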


\begin{proof}
Recall that $v(x,t)=T_k(u-\phi)(x',x_n+\rho(x'),t)$ in $\Psi(B_{\rho_0}(Y_0))$ with $T_k=\partial_{y_k}+\rho_{y_k}\partial_{y_n}$, $1\le k<n$, where we take $\rho_0>0$ small enough.
If we define $\tilde{u}(x,t)=u(x',x_n+\rho(x'),t)$, then $\tilde{u}_{ij}=T_iT_ju$, for any $1\le i,j\le n$. What we have proved above is equivalent to
\begin{equation}\label{rai}
  ||\tilde{u}_{kl}||_{\tilde{C}^{\alpha}(\overline{G})}\le C\text{   for any }1\le k< n\text{ and }1\le l\le n.
\end{equation}
To prove the $\tilde{C}^{\alpha}$ estimate for $\tilde{u}_{nn}$, note that the equation~\eqref{eq} can be written as $\det(P_iP_j\tilde{u})=\tilde{u}_t + \psi(\Psi^{-1}(x),t)$, with
$P_k=\partial_{x_k}-\rho_{x_k}\partial_{x_n}$ for $1\le k< n$ and $P_n=\partial_{x_n}$,
which is equivalently
$$Q_1\tilde{u}_{nn}+Q_2=\tilde{u}_t + \psi(\Psi^{-1}(x),t),$$
where $Q_1$ and $Q_2$ are polynomials of $\tilde{u}_{kl}$ with $k+l<2n$ and $\rho$ up to the second derivatives.
By Theorem~\ref{c2thm}, the eigenvalues of the Hessian $D^2u$ have positive lower and upper bounds; therefore, $Q_1$ has a positive lower bound.
This fact together with Theorem~\ref{ut} and~\eqref{rai} leads to the $\tilde{C}^{\alpha}$ estimate of $\tilde{u}_{nn}$.
From this the $\tilde{C}^{\alpha}$ estimate of $D^2u$ in $\Omega\times (\epsilon,T]$ follows by the arbitrariness of $Y_0$ and the interior higher regularity of $u$ in Theorem~\ref{hi}.
\end{proof}

\section{Existence and uniqueness}\label{s3}

Equation~\eqref{eq} is uniformly parabolic by virtue of Theorem~\ref{c2thm}. Hence with some compatibility conditions on $\phi$, we have the following short-time existence theorem by the inverse function theorem.
The proof is a modification of the argument in \cite[Theorem 2.5.7]{cv}.

\begin{thm}\label{se}
  Let $\Omega$ be a uniformly convex domain in $\mathbb R^n$ with smooth boundary. 
  Assume that $\varphi$ is a smooth and uniformly convex function on $\overline{\Omega}$, $\psi$ is a smooth function on $\overline{\Omega}\times[0,T]$ and $h$ is a smooth function on $\partial\Omega\times[0,T]$.
Suppose also that the equation
  \begin{equation}\label{sh}
    \left\{\begin{aligned}
      &-u_t + \det D^2u = \psi(x,t), \\
      &u|_{t=0}=\varphi,\quad
      u|_{\partial\Omega}=h,
    \end{aligned}
    \right.
  \end{equation}
  satisfies the compatibility condition of order $1$, i.e.,
  $$h(0)=\varphi|_{\partial\Omega}\quad \text{    and    }\quad -h_t(0) + \det D^2\varphi|_{\partial\Omega} = \psi(\cdot,0)|_{\partial\Omega}.$$
  Then for any $0<\alpha\le 1$, there is a small positive constant $\epsilon$ such that the initial boundary value problem~\eqref{sh} has a unique solution $u\in \tilde{C}^{2,\alpha}(\overline{Q}_{2\epsilon})$, which is uniformly convex in $x$ for all $t\in[0,2\epsilon)$.
  Here we have $||u||_{\tilde{C}^{2,\alpha}(\overline{Q}_{2\epsilon})}\le C$ and $Q_{2\epsilon}=\Omega\times(0,2\epsilon]$. The constants $\epsilon$ and $C$ depend on $n$, $\Omega$, the uniform convexity of $\varphi$, $||\psi||_{\tilde{C}^1(\overline{\Omega}\times[0,T])}$, $||\varphi||_{\tilde{C}^3(\overline{\Omega})}$, and $||h||_{\tilde{C}^2(\partial\Omega\times[0,T])}$.
  
\end{thm}

\begin{proof}
  Consider the equation
\begin{equation}\label{w-p}
  \left\{\begin{aligned}
    &-w_t+\Delta w=\Delta\varphi-\det D^2\varphi+\psi,\\
    &w|_{t=0}=\varphi,\quad w|_{\partial\Omega}=h.
  \end{aligned}
  \right.
\end{equation}
According to theorem 5.2 in Chapter 4 of~\cite{ru}, problem~\eqref{w-p} has a unique solution $w\in\tilde{C}^{2,\alpha}(\overline{Q}_{T_0})$ for some positive constant $T_0$.
Moreover, $w(x,t)$ is still uniformly convex in $x$ for any $0\le t\le T_0$ when $T_0$ is small enough. Indeed, by the $\tilde{C}^{2,\alpha}$ regularity of $w$,
$||w(\cdot,t)-\varphi||_{C^2(\overline\Omega)}\le Ct^{\alpha/2}$,
then the uniform convexity of $w(\cdot,t)$ follows from that of $\varphi$ for sufficiently small $t$.

Let $$g=-w_t+\det D^2 w - \psi \in\tilde{C}^{\alpha}(\overline{Q}_{T_0}).$$
Then by equation~\eqref{w-p} and the compatibility condition, $g|_{t=0}=0$. Let $V$ be an open neighborhood of $w$ in $\tilde{C}^{2,\alpha}(\overline{Q}_{T_0})$. Define the map $\Phi:V\to W$ by
\begin{equation}\label{phi-df}
\Phi(u)=(-u_t+\det D^2u - \psi,~~ u|_{t=0},~~ u|_{\partial\Omega\times[0,T_0]}),
\end{equation}
where $W=\tilde{C}^{\alpha}(\overline{Q}_{T_0})\times S$ and
$$S=\{(a,b)\in C^{2,\alpha}(\overline{\Omega})\times\tilde{C}^{2,\alpha}(\partial\Omega\times[0,T_0]):\;a|_{\partial\Omega}=b|_{t=0}, \det D^2a|_{\partial \Omega}=(\psi+b_t)|_{t=0}\}.$$
One can prove that $V$ and $W$ are Banach manifolds
(see theorem 4.4 in~\cite{K}). Note that $\Phi(w)=(g,\varphi,h)$. The differential of $\Phi$ at $w$ is a linear operator denoted by $L$.
$$L\eta=(-\eta_t+(\det D^2w)w^{ij}\eta_{ij} - \psi_t,~~ \eta(0),~~ \eta|_{\partial\Omega\times[0,T_0]}),\quad\text{for any $\eta\in V$.}$$
 The construction of $w$ and $S$ ensures the compatibility condition for $L$. Hence by Theorem 5.2 in Chapter 4 of~\cite{ru}, $L$ is an isomorphism from the tangent space of $V$ at $w$ to that of $W$ at $(g,\varphi,h)$.
 Then by the inverse function theorem~\cite[Section 15]{Deim}, $\Phi$ is an $C^1$ diffeomorphism from $B_{\tilde\rho}(w)$, a small neighborhood of $w$ in $\tilde{C}^{2,\alpha}(\overline{Q}_{T_0})$ to an open neighborhood $U$ of $(g,\varphi,h)$ in $W$. Take $\tilde\rho$ small enough, then the functions in $B_{\tilde\rho}(w)$ is still uniformly convex in $x$.

Let $\epsilon\in(0,T_0)$ be small, $\eta_{\epsilon}\in C^{\infty}([0,1])$ such that $0\le \eta_{\epsilon}\le 1$, $0\le \eta_{\epsilon}'\le C\epsilon^{-1}$, and
$\eta_{\epsilon}=0$, if $0\le t\le 2\epsilon$; $\eta_{\epsilon}=1$, if $4\epsilon\le t\le 1$.
Denote $g_{\epsilon}=g\eta_{\epsilon}$. Then $g_{\epsilon}\in \tilde{C}^{\alpha}(\overline{Q}_{T_0})$.
One can prove that $g_{\epsilon}\to g\text{ in } \tilde{C}^{\alpha}(\overline{Q}_{T_0})$ when $\epsilon\to 0$.
Thus by taking $\epsilon$ small enough, $(g_{\epsilon},\varphi,h)\in U\subset W$, and there is a function $u\in B_{\tilde\rho}(w)\subset \tilde{C}^{2,\alpha}(\overline{Q}_{T_0})$ such that $$\Phi(u)=(g_{\epsilon},\varphi,h).$$
Since $g_{\epsilon}=0$ for $0\le t\le 2 \epsilon$,
then $u\in\tilde{C}^{2,\alpha}(\overline{Q}_{2\epsilon})$ is a solution to equation~\eqref{sh} in $\Omega\times(0,2\epsilon]$ and is uniformly convex in $x$. 
\end{proof}

\begin{rem}\label{rem}
  When the initial-boundary condition of equation~\eqref{sh} satisfies the compatibility condition of order $m=[\frac{l+\alpha}{2}]+1$, there exists a unique solution $u\in \tilde{C}^{l+2,\alpha}(\overline{Q}_{\epsilon})$. The proof is almost the same as above, and here we omit it. One can refer to Chapter 4, Section 5 in \cite{ru} for the general definition of $m$-th order compatibility condition. To be more explicit, the compatibility condition of \textit{m}-th order for our equation means that
  $$u^{(k)}(x)|_{x\in \partial\Omega}=\frac{\partial^k h}{\partial t^k}\Big|_{t=0},$$
  for $k=0,\cdots,m$, where $u^{(k)}$ is recursively defined by
  $$u^{(0)}(x)=\varphi(x),\quad u^{(k+1)}(x)=\frac{\partial^k}{\partial t^k}\Big|_{t=0} \Big(\det D^2u(x,t) - \psi(x,t)\Big).$$
  Here the right term of the second equality can be recursively expressed by derivatives of $\varphi$ and $\psi$.
\end{rem}

The uniqueness of solutions to the equation~\eqref{eq} then follows immediately from Lemma \ref{le:cp}.
\begin{cor}\label{uni}
  Let $\Omega$ be a uniformly convex bounded domain with smooth boundary and $\phi$ be a continuous function in $\overline{Q}_T$. Then equation~\eqref{eq} admits at most one solution in $\tilde{C}^2$ that is convex in $x$.
\end{cor}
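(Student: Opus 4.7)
The plan is a direct two-sided application of Lemma~3.2 (the comparison principle). Let $u_1, u_2 \in \tilde C^2(Q_T)\cap C(\overline{Q}_T)$ be two parabolically convex solutions to~\eqref{eq} with the same continuous boundary datum $\phi$. Each satisfies $-\partial_t u_j + \det D^2 u_j = 0$ in $Q_T$ and $u_1 = u_2 = \phi$ on $\partial_p Q_T$, so the differential inequality in Lemma~3.2 holds with equality in both directions. Applying the lemma with $(u,v) = (u_1,u_2)$ will yield $u_1 \le u_2$ in $Q_T$, and applying it with the roles reversed, $(u,v) = (u_2,u_1)$, will yield $u_2 \le u_1$; hence $u_1 \equiv u_2$.

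The only subtle point is that Lemma~3.2 is stated under the hypothesis that $u$ and $v$ are \emph{uniformly} convex in $x$, whereas parabolic convexity only provides $D^2 u_j \ge 0$. I would handle this by a standard quadratic-perturbation and limit argument. Fix $\epsilon>0$ small, set $M = \max_{\overline{\Omega}} |x|^2/2$, and define
\begin{equation*}
u_1^\epsilon(x,t) = u_1(x,t) + \epsilon\bigl(|x|^2/2 - M\bigr) - \delta t, \qquad u_2^\eta(x,t) = u_2(x,t) + \eta\, |x|^2/2,
\end{equation*}
with parameters $\eta,\delta > 0$ to be chosen small relative to $\epsilon$. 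By construction $D^2 u_1^\epsilon \ge \epsilon I$ and $D^2 u_2^\eta \ge \eta I$, so both are uniformly convex in $x$. On $\partial_p Q_T$ we have $u_1^\epsilon \le u_1 = \phi = u_2 \le u_2^\eta$. Using $\det(A+\epsilon I) \ge \det A$ for $A \ge 0$, and using the $\tilde C^2$-bound on $u_2$ to estimate $\det(D^2 u_2 + \eta I) - \det D^2 u_2 \le C\eta$, the differential inequality
\begin{equation*}
-\partial_t u_1^\epsilon + \det D^2 u_1^\epsilon \;\ge\; \delta \;\ge\; C\eta \;\ge\; -\partial_t u_2^\eta + \det D^2 u_2^\eta
\end{equation*}
holds in $Q_T$ once $\eta$ is chosen small enough depending on $\delta$. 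Lemma~3.2 then yields $u_1^\epsilon \le u_2^\eta$ in $Q_T$, and sending $\epsilon,\delta,\eta \to 0$ gives $u_1 \le u_2$. Reversing roles completes the proof.

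The main obstacle, such as it is, is ensuring the differential inequality survives the perturbation; this is what forces the hierarchy $\eta \ll \delta$ and the $\tilde C^2$-bound on $u_2$. Everything else is routine: once both functions are made uniformly convex and the hypotheses of Lemma~3.2 are verified, the symmetric application is immediate and the limit $\epsilon \to 0$ is harmless because the perturbations converge uniformly to zero.
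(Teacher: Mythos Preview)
Your proposal is correct and follows the paper's approach exactly: apply the comparison Lemma~3.2 to the pair $(u_1,u_2)$ and then to $(u_2,u_1)$. The paper's own proof is a single sentence (``follows immediately from the above lemma'') and does not address the uniform-convexity hypothesis of Lemma~3.2; your quadratic-perturbation argument fills that gap carefully and is a legitimate way to reduce to the strictly convex case before passing to the limit.
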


\subsection{Proof of main theorems}
We are in a position to prove our main theorems.

{\bf Proof of Theorem~\ref{0-pri}:\quad}
By uniqueness, the a priori solution $u$ in Corollary~\ref{up} coincides with the solution derived from Theorem~\ref{se} on $[0,2\epsilon]$ if $\varphi=\phi|_{t=0}$ and $h=\phi|_{\partial\Omega}$. Thus, for some small positive constant $\epsilon$, $\|u\|_{\tilde{C}^{2,\alpha}(\overline Q_{2\epsilon})}$ is bounded.
Combined with Corollary~\ref{up}, this gives the global $\tilde{C}^{2,\alpha}$ estimate to $u$ on $\overline{Q}_T$.
\qed

{\bf Proof of Theorem~\ref{thm1.1}:\quad}
The existence of $\tilde{C}^{2,\alpha}$ solutions of equation~\eqref{eq} can be derived step by step from the short-time existence under the uniform global $\tilde{C}^{2,\alpha}$ estimate. It is also unique by Corollary~\ref{uni}.

Additionally, the higher order regularity of $u$ can also be obtained by classic regularity theory of linear parabolic equations iteratively if $\partial\Omega$ and $\phi$ are smooth enough, and equation~\eqref{eq} admits such solution uniquely if the equation satisfies higher order compatibility conditions in Remark~\ref{rem}. Specifically, let $k\ge 2$, $0<\beta\le 1$. Assume that $\partial\Omega$ is of class $C^{k+2,\beta}$, $\phi\in\tilde{C}^{k+2,\beta}( \overline{Q}_T)$, and equation~\eqref{eq} satisfies the compatibility condition of order $[\frac{k+\beta}{2}]+1$. Then the equation admits a unique solution $u\in \tilde{C}^{k+2,\beta}(\overline{Q}_T)$ that is convex in $x$.
\qed

\subsection{Necessity of condition $(\mathcal P3)$~} \label{s3.1}

Conditions $({\mathcal P1})$, $({\mathcal P2})$ and $({\mathcal P3})$ were introduced by Ivochkina and Ladyzhenskaia~\cite{I} to study \eqref{eq:1.1}.
Note that conditions $({\mathcal P1})$ and $({\mathcal P2})$ are natural requirements to ensure $C^{-1} \le \det D^2u\le C$ and thus guarantee the convexity of $u$. We will take counterexamples to show that condition $({\mathcal P3})$ is also necessary for the existence of convex solutions to~\eqref{eq}.

In one space dimension, take $\Omega=(0,1)$ and $T>1$. Then equation~\eqref{eq} reduces to the following initial-boundary value problem
\begin{equation*}
  \left\{\begin{aligned}
    &-v_t + v_{xx} = \psi(x,t)  &&\text{ in } Q_T=(0,1)\times(0,T],\\
    &v=\phi &&\text{ on }\partial_pQ_T,
  \end{aligned}\right.
\end{equation*}
where $\psi$ and $\phi$ are smooth functions satisfying conditions $({\mathcal P1})-({\mathcal P3})$. Assume $v$ is a smooth solution to this equation that is convex in $x$. Define a smooth bump function $w:\overline{Q}_T\to \mathbb{R}$ by
\begin{equation}\label{eq:bump}
w(x,t)=\left\{\begin{aligned}
&At\cdot e^{\frac{-B}{x^2(1-x)^2}} &&\text{ in } Q_T,\\
&0 &&\text{ on }\partial_pQ_T,
\end{aligned}\right.
\end{equation}
where $A,B$ are positive constants.
The function $w$ is non-negative, smooth up to the boundary, and satisfies that
\begin{equation}\label{eq:ex-u}
    w_t =0 \ \text{on} \ \partial\Omega \times [0,T]; \quad \frac{\partial^k w}{\partial x^k} =0 \ \text{on} \ \partial_p Q_T \ \ \text{for} \ k =0,1, 2, \cdots.
\end{equation}
Set $u=v+w$; then we have 
\begin{equation}\label{eq:ex-u-1}
  \left\{\begin{aligned}
    &-u_t + u_{xx} = \psi + (-w_t+w_{xx}) =: \psi+\rho  &&\text{ in } Q_T,\\
    &u=\phi &&\text{ on }\partial_pQ_T.
  \end{aligned}\right.
\end{equation}
Since \eqref{eq:ex-u} gives that $\rho=0$ on $\partial\Omega\times[0,T]$, then conditions $({\mathcal P1})$ and $({\mathcal P2})$ are satisfied for $\phi$ and $\psi+\rho$.
Direct computation yields
\begin{equation*}
w_t=Ae^{\frac{-B}{x^2(1-x)^2}},
\quad w_{xx} = -2ABt\cdot e^{\frac{-B}{x^2(1-x)^2}}\frac{P_6(x)}{x^6(x-1)^6}.
\end{equation*}
Here $P_6(x)$ is a sixth-degree polynomial given by 
$$P_6(x)=2B-8Bx+(8B-3)x^2+16x^3-33x^4+30x^5-10x^6.$$
One can check that there exist constants $x_0\in (0,1/2)$ and $c_1,c_2>0$ depending only on $B$ such that 
$$\inf_{x\in(0,1)}\rho(x,1)=\rho(x_0,1)=-A\cdot c_1<0,\quad\sup_{x\in(0,1)}\rho(x,1)=\rho(1/2,1)=A\cdot c_2>0.$$
Then by choosing $A$ sufficiently large, $\rho$ attains both a negative minimum and a positive maximum, each with large absolute values. This destroys the concavity of $\psi+\rho$. 
For any fixed $t$, the maximum of $w(\cdot,t)$ on $(0,1)$ is $At\cdot e^{-16B}$. 
Then, the maximum of $w$ can be made arbitrarily large and $u$ fails to remain convex in $x$; 
therefore, by the comparison principle, equation \eqref{eq:ex-u-1} does not admit any solution that is convex in $x$ for all time.

In $n$ space dimension, a counterexample can also be constructed similarly for rotationally symmetric solutions.
Let $v(r,t)$ be a smooth radial solution on $(0,1)\times(0,T]$ satisfying
\begin{equation*}
  \left\{\begin{aligned}
    &-v_t + (v_r/r)^{n-1}v_{rr} = \psi(r,t)  &&\text{ in } Q_T=(0,1)\times(0,T],\\
    &v=\phi &&\text{ on }\partial_pQ_T.
  \end{aligned}\right.
\end{equation*}
And $\bar{v}(x,t)=v(|x|,t)$ solves the parabolic Monge–Amp\`ere equation
\begin{equation*}
  \left\{\begin{aligned}
    &-\bar{v}_t+\det D^2\bar{v}= \psi(|x|,t) &&\text{ in } U_T=B_1(0)\times(0,T],\\
    &\bar{v}=\phi(|x|,t) &&\text{ on }\partial_pU_T.
  \end{aligned}\right.
\end{equation*}
We use the same one‐dimensional bump $w(r,t)$ as in \eqref{eq:bump} above and set $\bar{w}(x,t)=w(|x|,t)$. 
The equation for $\bar u= \bar v+ \bar w$ becomes
\begin{equation}\label{eq:ex-u-5}
  \left\{\begin{aligned}
    &- \bar u_t + \det D^2 \bar u  = \bar\Psi(x,t)  &&\text{ in } U_T,\\
    &\bar u=\phi(|x|,t) &&\text{ on }\partial_p U_T,
  \end{aligned}\right.
\end{equation}
where
\begin{equation*}
    \bar\Psi(x,t) = \Psi(|x|,t) = \Psi(r,t) =-v_t-w_t+\big(\frac{v_r+w_r}{r}\big)^{n-1}v_{rr}+\big(\frac{v_r+w_r}{r}\big)^{n-1}w_{rr}.
\end{equation*}
By \eqref{eq:ex-u}, $\bar \Psi$ coincides with $\psi$ on $\partial B_1(0)\times[0,T]$, and $\bar u$ matches $\bar v$ on $\partial_p U_T$. 
Thus, conditions $({\mathcal P1})$ and $({\mathcal P2})$ hold for arbitrary $A$ and $B$. 
For sufficiently large $A$, there exist $r_0\in (1/2,1)$ and $c_3,c_4>0$ depending on $B$ such that $w_r(r_0,1)=A\cdot c_3>0$ and $w_{rr}(r_0,1)=- A\cdot c_4<0$. Since $||v||_{\tilde{C}^2(\overline{Q}_T)}$ is bounded, $\Psi(r_0,1)$ can be made arbitrarily negative, which destroys the concavity of $\Psi$ in $r$. Additionally, $u$ loses convexity in $r$ when $w$ attains a sufficiently large maximum. From the comparison principle Lemma~\ref{le:cp}, we know that there is no convex solution to equation \eqref{eq:ex-u-5}.

\section{Gauss curvature flow}\label{s4}

In  this section, we consider the Gauss curvature flow with boundary
\begin{equation}\label{gauss}
  \left\{\begin{aligned}
    u_t&=\frac{\det D^2u}{(1+|Du|^2)^{\frac{n+1}{2}}}\quad\ \text{ in }Q_T,\\
    u&=\phi\quad\quad\quad\quad\quad\quad\quad\,\;\, \text{on }\partial_pQ_T.
  \end{aligned}\right.
\end{equation}
where $\phi$ and $Q_T$ are defined as before. We will obtain the {\it a priori} estimate for equation~\eqref{gauss} using the same method as before.

\begin{thm}\label{gau}
  Let $u\in \tilde{C}^4(\overline{Q}_T)$ be convex in $x$ and solve equation~\eqref{gauss}.
Suppose that $\Omega$ is a uniformly convex domain in $\mathbb{R}^n$, $\partial\Omega\in C^4$, $\phi\in \tilde{C}^4(\overline{Q}_T)$, $\phi_t\ge c_0 >0$ on $\partial\Omega\times[0,T]$ and $\phi(\cdot,0)$ is uniformly convex. Then we have
$$||u||_{\tilde{C}^{2,\alpha}(\overline{Q}_T)}\le C$$
for some positive constants $C$ and $\alpha$ depending only on $n, T, \Omega,c_0,||\phi||_{\tilde{C}^4(\overline{Q}_T)}$ and the uniform convexity of $\phi(\cdot,0)$.
\end{thm}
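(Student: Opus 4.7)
My proof mirrors the three-step architecture of Theorem~\ref{0-pri}: boundary $C^2$, global $C^2$ by Legendre duality, and global $\tilde C^{2,\alpha}$. The only new ingredient is the extra factor $(1+|Du|^2)^{(n+1)/2}$, which becomes controllable once the $C^1$ estimate is available.

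\textbf{Step 1 (boundary $C^2$).} I would obtain the $C^0$, $C^1$ and boundary $C^2$ estimates by minor modifications of Lemma~\ref{lem1} and Theorem~\ref{thm1}. Differentiating~\eqref{gauss} in $t$ gives a linear parabolic equation for $h=u_t$ whose structure is comparable to that of Lemma~\ref{lem1} after absorbing the bounded factor $(1+|Du|^2)^{(n+1)/2}$, so the maximum principle still yields $1/C\le u_t\le C$. The gradient is controlled by a convex barrier, and the Caffarelli-Nirenberg-Spruck argument of Theorem~\ref{thm1} goes through: the linearised operator $\mathcal{L}$ acquires a harmless first-order gradient term, while the barrier $-a|x|^2+bx_n$ still works since $\sum u^{ii}$ admits the required lower bound once $\det D^2u=u_t(1+|Du|^2)^{(n+1)/2}$ is two-sided bounded.

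\textbf{Step 2 (global $C^2$ via Legendre duality).} Let $U$ be the Legendre transform of $u$. A direct computation gives
\begin{equation*}
-U_t\det D^2 U=(1+|y|^2)^{-(n+1)/2}\quad\text{in }Q_T^*.
\end{equation*}
Rewriting in logarithmic form as $\log(-U_t)+\log\det D^2U=\Phi(y)$ with $\Phi(y)=-\tfrac{n+1}{2}\log(1+|y|^2)$, and differentiating twice in $y_k$ with $\tilde L=U_t^{-1}\partial_t+U^{ij}\partial_{ij}$, I obtain
\begin{equation*}
\tilde L U_{kk}=\frac{U_{tk}^2}{U_t^2}+U^{ip}U^{qj}U_{ijk}U_{pqk}+\Phi_{kk}(y).
\end{equation*}
Since $|y|=|Du|\le C$ by Step 1, $|\Phi_{kk}|\le C_0$, hence $\tilde L U_{kk}\ge -C_0$. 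This breaks the clean inequality of Lemma~\ref{asl}, so I introduce the barrier $W=U_{kk}-At$ with $A=C_0\sup|U_t|$. Using $\tilde L t=1/U_t$ and $-U_t\ge c>0$, I get $\tilde L W\ge 0$, and the maximum principle produces
\begin{equation*}
\sup_{Q_T^*}U_{kk}\le\sup_{\partial_pQ_T^*}U_{kk}+AT.
\end{equation*}
Combined with the boundary estimate from Step 1, this yields a global bound on $\|D^2U\|$ and, by duality, $1/C\le\lambda(D^2u)\le C$ on $\overline Q_T$. I expect this to be the \emph{main obstacle}: the nontrivial right-hand side of the transformed equation forces the linear-in-$t$ correction, which is precisely why the constant in Theorem~\ref{gau} is allowed to depend on $T$.

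\textbf{Step 3 (global $\tilde C^{2,\alpha}$).} With two-sided bounds on $\lambda(D^2u)$, equation~\eqref{gauss} is uniformly parabolic and its logarithmic form is concave in the principal variables up to a smooth bounded gradient term, so the Evans-Krylov theorem gives the interior $\tilde C^{2,\alpha}$ estimate for $U$, hence for $u$; Krylov-Safonov supplies the global $\tilde C^{\alpha}$ bound for $u_t$. The boundary $\tilde C^{2,\alpha}$ estimate then proceeds exactly as in Theorem~\ref{t2.11} and Corollary~\ref{up}: tangentially differentiating~\eqref{gauss} yields a linear uniformly parabolic equation with bounded right-hand side for $v=T_k(u-\phi)$, the weak Harnack oscillation estimate applied to $v/x_n$ gives the H\"older bound for $v_n$ on the flat boundary, and the second normal derivative is extracted from~\eqref{gauss} using the positive lower bound on the cofactor of $u_{nn}$ afforded by Theorem~\ref{c2thm}. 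Patching onto $[0,2\epsilon]$ by the short-time existence theorem then completes the proof.
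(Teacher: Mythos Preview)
Your overall architecture matches the paper's, and Steps~2 and~3 are essentially the paper's own arguments (including the correction $U_{kk}-Ct$, which is exactly what the paper does and is indeed the source of the $T$-dependence). The gap is in Step~1, where you assert that the barriers of Lemma~\ref{lem1} and Theorem~\ref{thm1} carry over with only ``minor modifications''. The linearised operator now carries a first-order drift,
\[
\mathcal L'=-\partial_t+u_tu^{ij}\partial_{ij}-\frac{2\beta}{1+|Du|^2}u_tu_i\partial_i,\qquad \beta=\tfrac{n+1}{2},
\]
and your claim that $w=-a|x|^2+bx_n$ ``still works'' for the mixed second derivatives is not justified. Applying $\mathcal L'$ to $w$ produces, in addition to the good term $-2au_t\sum u^{ii}$, a contribution of size $C(a|x|+b)$ coming from $b^iw_i$. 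Since the boundary condition $w\ge|\mathcal T(u-\phi)|$ on $\partial_pQ_T$ forces $b\ge a/c+\bar c$, this bad term is itself of order $a$; absorbing it into $(2au_t-\underline a)\sum u^{ii}$ would require an inequality between fixed structural constants that the data do not guarantee. The paper circumvents this by working in a small ball $B_\varepsilon\cap\Omega$ with the CNS-type barrier
\[
\zeta=\tfrac12\sum_{\alpha,\beta<n}(B_{\alpha\beta}-\mu\delta_{\alpha\beta})x_\alpha x_\beta+\tfrac12 Mx_n^2-Nx_n,
\]
the $Mx_n^2$ term being the key: it contributes $c_1Mu^{nn}$ to $\mathcal L'\zeta$, and AM--GM gives $\tfrac1n(\sum_i u^{ii}+Mu^{nn})\ge (M/\det D^2u)^{1/n}$, supplying a large constant $c_3M^{1/n}$ that swallows all lower-order terms independently of $\sum u^{ii}$.

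A similar issue affects the gradient bound. The barrier of Lemma~\ref{lem1} does not extend: to compare with $u$ you need $\det D^2w/(1+|Dw|^2)^{\beta}\ge\sup u_t$, but inflating $\det D^2w$ forces $|Dw|$ to grow fast enough that this ratio stays bounded (indeed tends to zero). The paper uses instead the Trudinger--Urbas logarithmic barrier $\bar w=\tilde\phi-q^{-1}\log(1+kd_{\partial\Omega})$ in a boundary strip, whose particular scaling keeps the Gauss curvature of the graph uniformly large while the normal derivative remains controlled.
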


\begin{proof}

The proof of this theorem is similar to that of Theorem~\ref{0-pri}, except that the linearized equation of~\eqref{gauss} has lower order terms concerning $Du$.

\textit{(a) $C^0$ and $C^1$ estimates}. Note that $u$ reaches its maximum and minimum value on $\partial_pQ_T$, then $||u||_{C^0(\overline{Q}_T)}\le ||\phi||_{C^0(\overline{Q}_T)}\le C$.

For convenience we denote $\beta=\frac{n+1}{2}$. Differentiating the equation~\eqref{gauss}, we have
$$\mathcal{L}'u_k=0,\quad\text{for}\ \ k=1,\cdots,n,\quad \text{and}\quad \mathcal{L}'u_t=0$$
with
\begin{equation}
\begin{aligned}
    \mathcal{L}'&=-\partial_t+\frac{\det D^2u}{(1+|Du|^2)^{\beta}}u^{ij}\partial_{ij}-2\beta\frac{\det D^2u}{(1+|Du|^2)^{\beta+1}}u_i\partial_i\\
    &=-\partial_t+u_tu^{ij}\partial_{ij}- \frac{2\beta }{1+|Du|^2}u_tu_i\partial_i ~.
\end{aligned}
\end{equation}

Then $\mathcal{L}'$ is a linear parabolic operator. It follows that $|u_t|$ is bounded by $\sup_{\partial_pQ_T}|u_t|$ and
$$C^{-1}\le u_t\le C$$
for some positive constant $C$ depending on $n, c_0,\Omega$, $||\phi||_{\tilde{C}^2}$ and the uniform convexity of $\phi(\cdot,0)$.

To prove the bound of $|Du|$, we apply the similar auxiliary function in~\cite{urb}. Fix any $t_0\in [0,T]$. Let $\tilde{\phi}$ be the solution of the equation
 $$ \left\{\begin{aligned}
    \det D^2\tilde{\phi}&=1\quad\quad\quad\quad\text{in }\Omega,\\
    \tilde{\phi}&=\phi(\cdot,t_0)\quad \, \text{  on }\partial\Omega.
  \end{aligned}\right.$$
The existence and $C^{2}$ bound of solutions to this equation are guaranteed by~\cite{cns1}.
We consider the barrier function
\begin{equation}\label{aux-func}
    \overline{w}=\tilde{\phi}-\frac{1}{q}\log(1+kd_{\partial\Omega})
\end{equation}
defined in $\Omega_{\delta}=\{x\in\Omega:\,d_{\partial\Omega}(x)<\delta\}$ with small positive constant $\delta$ such that $\overline{w} \in C^4(\Omega_{\delta})$.

The positive constants $\delta, q$, and $k$ shall be determined later.
Direct computations give that
$$\begin{aligned}
\frac{\det D^2\overline{w}}{(1+|D\overline{w}|^2)^{\beta}}&\ge \frac{1+Ck^{n+1}q^{-n}(1+kd_{\partial\Omega})^{-n-1}}{(1+ 2|D\tilde{\phi}|^2+2k^2q^{-2}(1+kd_{\partial\Omega})^{-2})^{\beta}}\\
&\ge Cq,
\end{aligned}$$
when $k^2q^{-2}(1+k\delta)^{-2}\ge \max_{\overline{\Omega}}(1+2|D\tilde{\phi}|^2)$ and $\delta$ is small enough. 

We first fix the sufficiently big constant $q$ such that $Cq\ge \max u_t$. Thus
$$\frac{\det D^2\overline{w}}{(1+|D\overline{w}|^2)^{\beta}}\ge \frac{\det D^2u}{(1+|Du|^2)^{\beta}}\quad\text{ in }\Omega_{\delta}\times\{t_0\}.$$
Then we take $\delta$ to be small enough and $k\delta\ge 1$ to make
$$k^2q^{-2}(1+k\delta)^{-2}\ge \frac{1}{4}q^{-2}\delta^{-2}\ge \max_{\overline{\Omega}}(1+2|D\tilde{\phi}|^2).$$
Finally, we take $k$ sufficiently large to satisfy 
$$\log (k\delta)\ge q(\max_{\overline{\Omega}}|\tilde{\phi}|+\max_{\overline{Q}_T}|u|),$$ then
$\overline{w}\le u$ on $\partial \Omega_{\delta}\times\{t_0\}$. Therefore $u\ge \overline{w}$ in $\Omega_{\delta}\times\{t_0\}$ by the comparison principle. Then for the exterior normal vector $\nu$ at any point $x_0$ on $\partial\Omega$, we have $u_{\nu}\le w_{\nu}\le C$. By convexity of $u$ in $x$ and arbitrariness of $t_0$, this proves the bound for $|Du|$ on $\overline{Q}_T$.

\textit{(b) $C^2$ estimates on the boundary}.
We can prove this by the same method as before but with different barrier functions. As for the second tangential derivatives, we can also regard $u_t(1+|Du|^2)^{\frac{n+1}{2}}$ which has positive lower and upper bounds as a data function. Then it follows that
\begin{equation}\label{4-uxx}
C^{-1}I_{n-1}\le D_{x'x'}u\le CI_{n-1}\quad\text{on}\quad \partial_p Q_T.
\end{equation}

To prove the uniform bound for the mixed second derivatives, we use the comparison principle for $\mathcal{L}'$. Denote $\mathcal{T}$ as in~\eqref{t-alpha}. Then
\begin{equation}\label{2-c0}
  |\mathcal{L}'\mathcal{T}(u-\phi)|\le \underline a\Big(1+\sum_{i=1}^n u^{ii} \Big)
\end{equation}
for some positive constant $\underline a$ depending on $||\phi||_{\tilde{C}^4(\overline{Q}_T)}$, the positive lower and upper bounds for $u_t$ and the bound for $|Du|$.

Recall that $0\in\partial\Omega$ and the boundary $\partial\Omega$ can be represented as in~\eqref{boundary} near $0$.
On $\Omega\times\{0\}$, $\mathcal{T}(u-\phi)=0$
and
\begin{equation}\label{p-o}
  |\mathcal{T}(u-\phi)|\le \bar a|x'|^2,\quad\text{on $(B_{\varepsilon}\cap \partial\Omega)\times[0,T]$}.
\end{equation}

In $(B_\varepsilon\cap\Omega)\times[0,T]$ with $B_\varepsilon=\{|x|<\varepsilon\}$, we shall consider the following barrier function
  $$\zeta(x,t)=\frac{1}{2}\sum_{\alpha,\beta<n}(B_{\alpha\beta}-\mu\delta_{\alpha\beta}) x_{\alpha}x_{\beta}+\frac{1}{2}Mx_n^2-Nx_n,$$
where $\mu$ is a fixed small constant such that $B_{\alpha\beta}-\mu\delta_{\alpha\beta}>0$, and $M, N$ will be determined later.

First we take $\varepsilon$ small, $M\varepsilon\le 2$ and $N$ larger than $1+\lambda_{max}/c$, where $\lambda_{max}$ is the maximum eigenvalue of the $(n-1)\times(n-1)$ matrix $(B_{\alpha\beta} -\mu\delta_{\alpha\beta})$, and $c$ is the constant such that $x_n\ge c|x'|^2$ on $\partial\Omega$. Then we have $\zeta<0$ in $(B_\varepsilon\cap\Omega)\times\{0\}$.

Thus
\begin{equation}\label{z-b}
  A\zeta \pm \mathcal{T}(u-\phi)\le 0,\quad  \text{in $(B_\varepsilon\cap\Omega)\times\{0\}$ for all positive constant $A$.}
\end{equation}

Note that we can choose a large $M\ge 2$ and a small $c_1>0$ such that
\begin{equation}\label{l'z}
\begin{split}
  \mathcal{L}'\zeta&\ge c_1 \Big(\sum_{i=1}^n u^{ii} + Mu^{nn} \Big)-c_2 \\
  &\ge \frac12 c_1 \sum_{i=1}^n u^{ii} -c_2+ c_3 M^{1/n}. 
  \end{split}
\end{equation}
  Here we use the bounds for $u_t$ and the fact that
  \begin{equation*}
  \begin{split}
  \frac1n \Big(\sum_{i=1}^n u^{ii} + Mu^{nn}\Big) \ge \frac1n \Big(\sum_{i=1}^{n-1} \lambda_i^{-1} + M\lambda_n^{-1}\Big)\ge \Big(M\lambda_1^{-1} \cdots \lambda_n^{-1}\Big)^{1/n},
  \end{split}
  \end{equation*}
  where $\lambda_1\le \cdots \le \lambda_n$ are the eigenvalues of $\{u_{ij}\}$. $c_3$ is a positive constant with $c_3\le Cc_1$, where $C$ depends on $n$ and the maximum of $u_t$. The constant $c_2$ depends on the maximum of $u_t$, $|Du|$ and the diameter of $\Omega$.
  Hence, for $A\ge\max\{1,2\underline a /c_1\}$ and $M\ge ((c_1+c_2)/c_3)^n$, we have
  \begin{equation}\label{l-pm}
  \mathcal{L}'(A\zeta \pm \mathcal{T}(u-\phi)) \ge0, \quad\text{in}\quad (B_\varepsilon\cap\Omega)\times(0,T].
  \end{equation}
  From now on $M$ is fixed.

  Next, we estimate $\zeta$ on  $\partial(B_\varepsilon\cap\Omega)\times[0,T]$.
  For $\varepsilon$ small enough, we have
  \begin{equation}\label{z-s1}
  \zeta \le -\frac12\mu\sum_{\beta<n}x_\beta^2 + O(|x'|^3) \le -\frac14 \mu|x'|^2,\quad\text{on $(\overline{B_\varepsilon}\cap\partial\Omega)\times[0,T]$.}
  \end{equation}
  Then, on $(\partial B_\varepsilon\cap\Omega)\times[0,T]$ and if $\frac14 \mu|x'|^2 > Mx_n^2$, we have
  \begin{equation}\label{z-s2}
  \begin{split}
  \zeta &\le \frac{1}{2}(B_{\alpha\beta}-\mu\delta_{\alpha\beta}) x_{\alpha}x_{\beta}+\frac{1}{2}Mx_n^2-x_n \\
  &\le -\frac{1}{2}\mu|x'|^2 + C|x'|^3 + Mx_n^2 \\
  &\le -c_4 \varepsilon^2,\quad\quad \text{for $\varepsilon$  small enough.}
  \end{split}
  \end{equation}
  Additionally, on $(\partial B_\varepsilon\cap\Omega)\times[0,T]$ and if $\frac14 \mu|x'|^2 \le Mx_n^2$, we have $x_n\ge c_5\varepsilon$ and
  \begin{equation}\label{z-s3}
  \begin{split}
  \zeta &\le  C|x'|^2 +Mx_n^2-x_n \\
  &\le  (M+C)\varepsilon^2 -c_5\varepsilon \\
  &\le -\frac12c_5 \varepsilon,\quad\quad \text{for $\varepsilon$  small enough.}
  \end{split}
  \end{equation}
  Since $|\mathcal{T}(u-\phi)|$ has a upper bound in $\overline{Q}_T$, which supposed to be $c_6$, then by~\eqref{p-o},~\eqref{z-s1},~\eqref{z-s2} and~\eqref{z-s3}, we get
  \begin{equation}\label{z-s}
  A\zeta \pm \mathcal{T}(u-\phi) \le0, \quad\text{on}\quad \partial(B_\varepsilon\cap\Omega)\times[0,T],
  \end{equation}
  for $A=\max\{1,2\underline a/c_1,4\bar a/\mu,c_6/(c_4\varepsilon^2),2c_6/(c_5\varepsilon) \}$.
  Thus by maximum principle and equations~\eqref{l-pm},~\eqref{z-b} and~\eqref{z-s}, we get
  \begin{equation}
  A\zeta \pm \mathcal{T}(u-\phi) \le0, \quad\text{in}\quad (B_\varepsilon\cap\Omega)\times[0,T].
  \end{equation}

  Since $\zeta(0,t_0)=\mathcal{T}(u-\phi)(0,t_0)=0$ for each $t_0\in[0,T]$, we have
  $$|\partial_n\mathcal{T}(u-\phi)(0,t_0)|=|u_{\alpha n}(0,t_0)-\phi_{\alpha n}(0,t_0)|\le AN,$$
  which yields that
  \begin{equation}
    |u_{\alpha n}(0,t_0)|\le AN+||\phi||_{C^2(\overline{Q}_T)}\le C.
  \end{equation}

By the equation~\eqref{gauss} and estimate~\eqref{4-uxx}, we then get $u_{nn}\le C$ on $\partial\Omega\times[0,T]$ for some universal constant $C$. Combining with the positive lower and upper bounds of $u_t$, we have
\begin{equation}\label{g-u-p}
1/C\le \lambda(D^2u)\le C\quad\text{on }\partial_pQ_T
\end{equation}
for some positive constant $C$ depending on $n,\Omega,c_0, ||\phi||_{\tilde{C}^4(\overline{Q}_T)}$ and the uniform convexity of $\phi(\cdot,0)$.

\textit{(c) global $C^2$ estimates}. The Legendre transform $U$ of $u$ satisfies
$$-U_t\det D^2U=(1+|y|^2)^{-\beta}\quad\text{in }Q_T^*$$
with 
$C^{-1}\le -U_t\le C $ and $||D^2U||_{L^{\infty}(\partial_pQ_T^*)}\le C$. Conducting the same calculation as in Lemma~\ref{asl},  we have
$$\tilde{L}U_{kk}\ge -2\beta,$$
where $\tilde{L}=\frac{1}{U_t}\partial_t+U^{ij}\partial_{ij}$.
Thus $\tilde{L}(U_{kk}-Ct)\ge 0$ for some universal constant $C$. Hence by maximum principle and~\eqref{g-u-p},
$$\sup_{Q_T^*}U_{kk} \le C\Big(\sup_{\partial_pQ_T^*}U_{kk}+T\Big)\le C,$$
which yields $\lambda(D^2u)\ge C$ in $Q_T$ for some positive constant $C$ depending on $n,\Omega,c_0, T, ||\phi||_{\tilde{C}^4(\overline{Q}_T)}$ and uniform convexity of $\phi(\cdot,0)$. Therefore we have proved
$$1/C\le \lambda(D^2u)\le C\quad\text{on }\overline{Q}_T.$$

\textit{(d) $\tilde{C}^{2,\alpha}$ estimates}. The interior higher regularity of $u$ follows from that of $U$ in $Q_T^*$ by~\cite{tso}. Under the same condition of Theorem~\ref{ut}, it holds $||u_t||_{\tilde{C}^{\alpha}(\overline{Q}_T)}\le C$ by Krylov-Safonov's estimate.

Note that if we let $v=T_k(u-\phi)$, $1\le k\le n-1$,  as in subsection~\ref{1.3}, we have
$$-v_t+a^{ij}v_{ij}+b^iv_i=f\quad\text{in }G=B_{R_0}^+\times(t_0-R_0^2,t_0].$$
The term $b^iv_i$ comes from the differential of $(1+|Du|^2)^{-\beta}$ and the related coordinate transformation. Here $a^{ij}$ is positive definite, and $|b^i|$, $|v_i|$ and $|f|$ are bounded due to the boundedness of $u_t$, $Du$ and $D^2u$.
This is the case we have dealt with in equation~\eqref{v-eq}. Thus the $\tilde{C}^{2,\alpha}$ estimate of $u$ on $\overline{\Omega}\times[\epsilon,T]$ follows from the same argument in the proof of Corollary~\ref{up}.

The short-time existence theorem ensures the $\tilde{C}^{2,\alpha}$ estimate of $u$ in a small time interval. The proof in this case is similar to the proof of Theorem~\ref{se}. The Fr\'echet differential of the map $\Phi$, defined similarly as~\eqref{phi-df}, is a linear parabolic operator with extra lower order terms. It is still an isomorphism under some suitable compatibility conditions. Thus the short-time existence of the solution follows from the inverse map theorem.

In conclusion, we have proved the global $\tilde{C}^{2,\alpha}$ estimate for $u$, the solution of equation~\eqref{gauss}.
\end{proof}

We also present an existence theorem for equation~\eqref{gauss}.

\begin{thm}
Assume  $\Omega$ is a smooth and uniformly convex domain in $\mathbb{R}^n$, $\phi$ is a smooth function in $\overline{Q}_T$ with $\phi_t\ge c_0 >0$ on $\partial\Omega\times[0,T]$, and $\phi(\cdot,0)$ is uniformly convex. If equation~\eqref{gauss} satisfies the compatibility condition of any order, then there exists a unique smooth solution $u$ to the initial boundary value problem~\eqref{gauss} that is convex in $x$.
\end{thm}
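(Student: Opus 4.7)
The plan is to mirror the scheme used for Theorem~\ref{thm1.1}: combine a short-time existence result for \eqref{gauss} obtained via the inverse function theorem with the global a priori estimate already established in Theorem~\ref{gau}, and then extend the solution to $[0,T]$ by a continuation argument. Uniqueness will be supplied by a comparison principle tailored to \eqref{gauss}, and smoothness will come from bootstrapping classical linear parabolic regularity.

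First, I would prove a short-time existence analogue of Theorem~\ref{se}. Take a background function $w$ solving a linear parabolic problem of the form $-w_t+\Delta w=\Delta\varphi-\det D^2\varphi/(1+|D\varphi|^2)^{\beta}$ with $w(\cdot,0)=\varphi=\phi(\cdot,0)$ and $w=\phi$ on $\partial\Omega\times[0,T_0]$. Then define, on a neighbourhood $V\subset\tilde C^{2,\alpha}(\overline Q_{T_0})$ of $w$, the map
\begin{equation*}
\Phi(u)=\bigl(-u_t+(1+|Du|^2)^{-\beta}\det D^2u,\ u|_{t=0},\ u|_{\partial\Omega\times[0,T_0]}\bigr).
\end{equation*}
Its Fr\'echet derivative at $w$ is a linear parabolic operator of the form $L\eta=-\eta_t+a^{ij}(w)\eta_{ij}+b^i(w)\eta_i$ where $a^{ij}$ is uniformly elliptic on $V$ (shrinking $V$ so uniform convexity is preserved) and $b^i$ is a bounded lower-order term produced by differentiating $(1+|Dw|^2)^{-\beta}$. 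By the linear parabolic theory (Theorem~5.2 of Chapter~4 in~\cite{ru}) together with the prescribed compatibility conditions, $L$ is an isomorphism between the appropriate tangent spaces of $\tilde C^{2,\alpha}$-type Banach manifolds encoding the first-order compatibility. The inverse function theorem then gives a local $C^1$ diffeomorphism, and a cut-off $g_\epsilon$ of the residual $g=-w_t+(1+|Dw|^2)^{-\beta}\det D^2w$ vanishing on $[0,2\epsilon]$ produces a uniformly parabolically convex solution $u\in\tilde C^{2,\alpha}(\overline Q_{2\epsilon})$, exactly as in the proof of Theorem~\ref{se}.

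Next, uniqueness. If $u,v$ are two uniformly parabolically convex solutions of \eqref{gauss}, set $w=u-v$ and write
\begin{equation*}
0=(u-v)_t-\bigl[(1+|Du|^2)^{-\beta}\det D^2u-(1+|Dv|^2)^{-\beta}\det D^2v\bigr].
\end{equation*}
Integrating along the segment $sv+(1-s)u$ in $s\in[0,1]$ expresses the bracket as $\bar a^{ij}w_{ij}+\bar b^{i}w_{i}$ with $\bar a^{ij}$ a positive definite matrix (from the Monge-Amp\`ere part) and $\bar b^i$ bounded (from $(1+|Du|^2)^{-\beta}$). The linear parabolic maximum principle then gives $w\le 0$, and by symmetry $u=v$.

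The main step is the continuation. Theorem~\ref{gau} provides a global a priori $\tilde C^{2,\alpha}$ bound, depending only on the data, for any parabolically convex solution on any time interval $[0,T']\subset[0,T]$. Let $T_*$ be the supremum of times for which \eqref{gauss} admits a parabolically convex solution in $\tilde C^{2,\alpha}$. Theorem~\ref{gau} yields $\tilde C^{2,\alpha}$ bounds up to $T_*$; uniform convexity of $D^2u$ is preserved by the lower estimate $\lambda(D^2u)\ge 1/C$; thus $u(\cdot,T_*)$ is a valid ``initial datum'' for the short-time theorem at time $T_*$, allowing continuation beyond $T_*$ unless $T_*=T$. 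Finally, with $u\in \tilde C^{2,\alpha}$ in hand, the equation becomes uniformly parabolic and concave after Legendre transform, so bootstrapping via the higher-order compatibility conditions together with classical Schauder theory for linear parabolic equations (as in the last paragraph of the proof of Theorem~\ref{thm1.1}) promotes $u$ to $\tilde C^{k+2,\beta}$ for each $k$, hence to smoothness.

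The main obstacle I expect is verifying that the linearization $L$ in the short-time step remains an isomorphism in the right Hölder--Banach-manifold setting once the extra first-order term $b^i\partial_i$ coming from the $(1+|Du|^2)^{-\beta}$ factor is present; unlike the pure Monge-Amp\`ere case, one must check that the compatibility condition on $\Phi(w)$ at $t=0$ still encodes the first-order compatibility for $L$. This is a bookkeeping issue handled by writing the condition $h_t(0)=(1+|D\varphi|^2)^{-\beta}\det D^2\varphi|_{\partial\Omega}$ in the definition of the manifold $S$; once that is done, the rest of the proof is a routine transcription of the arguments for \eqref{eq}.
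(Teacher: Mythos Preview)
Your proposal is correct and follows essentially the same scheme the paper intends: the paper does not give a separate proof of this theorem but indicates (at the end of the proof of Theorem~\ref{gau} and by analogy with the proof of Theorem~\ref{thm1.1}) that one combines the a priori estimate of Theorem~\ref{gau} with a short-time existence result obtained by the inverse function theorem, the only modification being the extra first-order term $b^i\partial_i$ in the linearization coming from $(1+|Du|^2)^{-\beta}$. Your treatment of uniqueness, continuation, and bootstrapping to higher regularity is likewise the intended argument.
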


\begin{rem}
Using the above method, one can also prove the global $\tilde{C}^{2,\alpha}$ estimate for the solutions to the first initial-boundary value problem of $\gamma$-Gauss curvature flow as well as the existence of solutions when $0<\gamma\le 1$.
\end{rem}

\section*{Acknowledgement}

We would like to thank Professor X.-J. Wang for his suggestions on this topic.

\end{document}